\documentclass[a4paper,notitlepage,twoside,leqno,12pt]{amsart}

\usepackage{mathrsfs}
\usepackage{bbm,pifont,latexsym}
\usepackage{anysize}  
\marginsize{2.8cm}{2.8cm}{2.3cm}{2.3cm}

\usepackage{dcolumn,indentfirst}
\usepackage[pdfpagemode=UseNone,linktocpage=true,pdfstartview=FitH]{hyperref}
\usepackage{amsmath,amssymb,amscd,amsthm,amsfonts,mathrsfs}
\usepackage{color,graphicx,xcolor,graphics,subfigure,extarrows,caption2}
\usepackage{titletoc} 

\newtheorem{thm}{Theorem}[section]
\newtheorem{lema}[thm]{Lemma}
\newtheorem{cor}[thm]{Corollary}
\newtheorem{prop}[thm]{Proposition}

\theoremstyle{definition}
\newtheorem*{defi}{Definition}
\newtheorem*{rmk}{Remark}
\newtheorem{exam}{Example}

\newcommand{\D}{\mathbb{D}}

\newcommand{\R}{\mathbb{R}}

\newcommand{\C}{\mathbb{C}}
\newcommand{\EC}{\widehat{\mathbb{C}}}

\newcommand{\MA}{\mathcal{A}}

\newcommand{\ii}{\textup{i}}


\makeatletter\@addtoreset{equation}{section}\makeatother

\begin{document}

\author{YINGQING XIAO}
\address{College of Mathematics and Econometrics, Hunan University, Changsha, 410082, P. R. China}
\email{ouxyq@hnu.edu.cn}

\author{FEI YANG}
\address{Department of Mathematics, Nanjing University, Nanjing, 210093, P. R. China}
\email{yangfei\rule[-2pt]{0.2cm}{0.5pt}math@163.com}

\title[SINGULAR PERTURBATIONS OF THE SIMPLE POLYNOMIALS]
{SINGULAR PERTURBATIONS WITH MULTIPLE POLES OF THE SIMPLE POLYNOMIALS}

\begin{abstract}
In this article, we study the dynamics of the following family of rational maps with one parameter:
\begin{equation*}
f_\lambda(z)= z^n+\frac{\lambda^2}{z^n-\lambda},
\end{equation*}
where $n\geq 3$ and $\lambda\in\mathbb{C}^*$. This family of rational maps can be viewed as a singular perturbations of the simple polynomial $P_n(z)=z^n$. We give a characterization of the topological properties of the Julia sets of the family $f_\lambda$ according to the dynamical behaviors of the orbits of the free critical points.
\end{abstract}

\subjclass[2010]{Primary: 37F45; Secondary: 37F10, 37F30}

\keywords{Julia set; Fatou set; Jordan domain; connectivity.}

\date{\today}



\maketitle


\section{Introduction}

For a given rational map $f:\EC\rightarrow \EC$, we are interested in the dynamical system generated by the iterates of $f$. In this setup, the Riemann sphere $\EC$ can be divided into two dynamically meaningful and completely invariant subsets: the Fatou set and the Julia set. The \emph{Fatou set} $F(f)$ of $f$ is defined to be the set of points at which the family of iterates of $f$ forms a normal family, in the sense of Montel. The complement of the Fatou set is called the \emph{Julia set}, which we denote by $J(f)$. A connected component of the Fatou set is called a \emph{Fatou component}. According to Sullivan's theorem, every Fatou component of a rational map is eventually periodic and there are five kinds of periodic Fatou components: attracting domains, super-attracting domains, parabolic domains, Siegel disks and Herman rings.

The topology of the Julia sets of rational maps, such as the connectivity and local connectivity, is an interesting and important problem in complex dynamics. It was proved by Fatou that the Julia set of a polynomial is connected if and only if the polynomial has bounded critical orbits. Recently, Qiu and Yin, independently, Kozlovski and van Strien, gave a sufficient and necessary condition for the Julia set of a polynomial to be a Cantor set and hence gave an affirmative answer to Branner and Hubbard's conjecture (see \cite{QY} and \cite{KvS}). For rational maps, the Julia sets may exhibit more complex topological structures. Pilgrim and Tan proved that if the Julia set of a hyperbolic (more generally, geometrically finite) rational map is disconnected, then, with the possible exception of finitely many periodic components and their countable collection of preimages, every Julia component is either a single point or a Jordan curve \cite{PT}.

For the general rational maps, it is not easy to describe the topological structures of the corresponding Julia sets. However, for some special families of rational maps, the topological properties of the Julia sets can be studied well. Recently, Devaney considered a singular perturbation of the complex polynomials by adding a pole and studied the dynamics of the rational maps with the following form
$$
F_{\lambda}(z)=P(z)+\frac{\lambda}{(z-a)^d},
$$
where $P(z)$ is a polynomial with degree $n\geq 2$ whose dynamics are completely understood, $a\in \C$, $d\geq 1$ and $\lambda\in\C^*$ \cite{Dev}. When $P(z)=z^n$ with $n\geq 2$, $a=0$, $d\geq 2$ and $\lambda\in\C^*$, the family of rational maps $F_\lambda$ is commonly called the \emph{McMullen maps}, which has been studied extensively by Devaney and his collaborators in a series of articles (see \cite{BDLSS, Dev, DLU, DR}). Specifically, it is proved in \cite{DLU} that if the orbits of the critical points of $F_{\lambda}$ are all attracted to $\infty$, then the Julia set of $F_{\lambda}$ is either a Cantor set, a Sierpi\'nski curve, or a Cantor set of circles. In particular, the Julia set of $F_\lambda$ is a Cantor set of circles if $1/n+1/d<1$ and $\lambda\neq 0$ is small. If the orbits of the free critical points of $F_{\lambda}$ are bounded, then $F_\lambda$ has no Herman rings \cite{XQ} and actually, the corresponding Julia set is connected \cite{DR}. Since the McMullen family behaves extremely rich dynamics, this family has also been studied in \cite{QWY} and \cite{St1}. When $P(z)=z^n+b$ with $n\geq 2$, $a=0$, $d\geq 2$ and $\lambda$, $b\in\C^*$, the family of maps $F_\lambda$ is called the \emph{generalized McMullen maps}, which also attracts many people's interest. Some additional dynamical phenomenon happens for this family since the parameter space becomes $\C^*\times\C^*$, which is two-dimensional.  For a comprehensive study on the generalized McMullen map, see \cite{BDGMR, GG, KD, XQY} and the references therein. There exist also some other special families of rational maps which were studied well. For example, see \cite{FY}, \cite{JS} and \cite{St2}.

Note that for McMullen maps and generalized McMullen maps, the point at infinity is always a super-attracting fixed point and the origin is the unique pole. If the parameter is close to the origin, then each of these maps can be seen as a perturbation of the simple polynomial $P_n(z)=z^n$. Recently, the singular perturbation with multiple poles has been considered. For example, Garijo, Marotta and Russell studied the singular perturbations of the form $G_{\lambda}(z)=z^2-1+\lambda/((z^{d_0}(z + 1)^{d_1})$ in \cite{GMR} and focused on the topological characteristics of the Julia and Fatou sets of $G_{\lambda}$ that arise when the parameter $\lambda$ becomes nonzero.

In this article, we consider the following family of rational maps
\begin{equation}\label{our-family}
f_\lambda(z)= z^n+\frac{\lambda^2}{z^n-\lambda},
\end{equation}
where $n\geq 3$ and $\lambda\in\C^*$. This family can be also seen as a perturbation of the simple polynomial $P_n$ if $\lambda$ is small. We would like to mention that the perturbation here is essentially different from Devaney's family $F_\lambda$ (including McMullen maps and the generalized McMullen maps) and Garijo-Marotta-Russell's family $G_\lambda$ since the map $f_\lambda$ has multiple poles and the origin is no longer a pole.

It is easy to see that $f_\lambda$ has a super-attracting fixed point at $\infty$. Since the degree is $2n$, the map $f_\lambda$ has $4n-2$ critical points (counted with multiplicity). Note that the local degree of $\infty$ is $n$ and the origin is another critical point with local degree $2n$ (see \eqref{equ-family-other}) whose critical value is $v_0=f_\lambda(0)=-\lambda$.  Hence, this leaves $n$ more critical points. We call the remaining $n$ critical points and $0$ the \emph{free} critical points. In \S\ref{subsec-dyn-sym} we will show that these $n$ free critical points (except $0$) have a common critical value $v_1=3\lambda$ (see \eqref{free-cv}).  We call these two critical values $v_0$ and $v_1$ the \emph{free} critical values. The dynamics of $f_\lambda$ is determined by the orbits of these two free critical values. In this article, we will give a quite complete description of the Julia sets of the family $f_\lambda$ for arbitrary parameter $\lambda\in\C^*$.

\subsection{Statement of the results}

In the rest of this article, we  use $\MA(\infty)$ to denote the super-attracting basin of $f_\lambda$ containing $\infty$. Recall that $v_0=-\lambda$ and $v_1=3\lambda$ are two free critical values of $f_\lambda$.

\begin{thm}\label{main-thm}
For $n\geq 3$ and $\lambda\in\C^*$, the Julia set $J(f_\lambda)$ of $f_\lambda$ is one of the following cases:
\begin{itemize}
\item[(1)] If $v_0\in\MA(\infty)$, then $J(f_\lambda)$ is a Cantor set;
\item[(2)] If $v_0\not\in\MA(\infty)$ and $v_1\in\MA(\infty)$, then $J(f_\lambda)$ is connected;
\item[(3)] If $v_0$, $v_1\notin \MA(\infty)$, then there are two possibilities:
  \begin{itemize}
  \item[(3a)] If each Fatou component contains at most one free critical value, then $J(f_\lambda)$ is connected;
  \item[(3b)] If $v_0$ and $v_1$ lie in the same Fatou component, then $J(f_\lambda)$ is the union of countably many Jordan curves and uncountably many points and hence disconnected.
  \end{itemize}
\end{itemize}
\end{thm}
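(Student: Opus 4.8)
The plan is to analyze the four cases by tracking the two free critical values $v_0=-\lambda$ and $v_1=3\lambda$, exploiting the fact that $\infty$ is super-attracting with local degree $n$ and that $f_\lambda$ has exactly two free critical values. First I would set up the standard picture near $\infty$: there is a maximal open topological disk $D$ around $\infty$ on which $f_\lambda$ is conjugate to $z\mapsto z^n$, bounded by an equipotential, and $\MA(\infty)$ is the increasing union of pullbacks of $D$. The combinatorial behaviour of these pullbacks is governed entirely by which free critical values are captured by $\MA(\infty)$. For case~(1), when $v_0\in\MA(\infty)$: since $0$ is a critical point of local degree $2n$ with critical value $v_0$, both the origin-critical point and (after pulling back far enough) all the other free critical points land in $\MA(\infty)$, so every critical point of $f_\lambda$ lies in $\MA(\infty)$. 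A Böttcher/Shishikura-type covering argument then shows the successive preimages of the equipotential disk tile a neighbourhood of the Julia set by nested annuli whose moduli are summable to infinity, forcing $J(f_\lambda)$ to be a Cantor set. I would model this on the corresponding statement for McMullen maps in \cite{DLU}, checking that the pole structure of $f_\lambda$ does not obstruct the escape argument.

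For cases (2) and (3a) I would prove connectivity of $J(f_\lambda)$ via the criterion that the Julia set of a rational map is connected if and only if every Fatou component is simply connected, equivalently if no Fatou component is a multiply connected domain separating critical values. Concretely, I would use the Riemann–Hurwitz formula on each Fatou component $U$: if $U$ is periodic of period $p$, then $f_\lambda^p:U\to U$ is a branched cover and the connectivity of $U$ is controlled by the number of critical points (with multiplicity) it contains over its orbit. In case~(2), $v_0\notin\MA(\infty)$ but $v_1\in\MA(\infty)$; the key point is that the origin (the critical point of local degree $2n$, accounting for $2n-1$ critical points) does not escape, so $\MA(\infty)$ contains only the $n$ simple free critical points plus the degree-$n$ point at $\infty$, and Riemann–Hurwitz forces $\MA(\infty)$ — and then, by pulling back, every Fatou component — to be simply connected. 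In case~(3a) neither $v_0$ nor $v_1$ escapes and each Fatou component carries at most one free critical value; again Riemann–Hurwitz on the first-return map, component by component, together with an induction on the "level" of a component (its minimal number of iterates to reach a periodic one), shows each component is simply connected, hence $J(f_\lambda)$ is connected. I expect the bookkeeping here to be the routine part, modeled on \cite{DR} and \cite{XQY}.

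The heart of the theorem is case~(3b), where $v_0$ and $v_1$ lie in the \emph{same} Fatou component $U_0$ and $J(f_\lambda)$ is the union of countably many Jordan curves and uncountably many points. The plan is: (i) show $U_0$ is then multiply connected — it contains both free critical values, hence (pulling back to its critical preimages) enough critical points that Riemann–Hurwitz forces positive genus/higher connectivity; a more careful count using that $0$ has local degree $2n$ should pin down the exact connectivity of $U_0$ and its preimages, showing they are annuli (connectivity $2$). (ii) Identify the "buried" Julia components: by the Pilgrim–Tan theorem \cite{PT} (or a direct argument since $f_\lambda$ is hyperbolic in this regime, or geometrically finite in general), every Julia component that is not iterated onto the boundary of an annular Fatou component is a single point, while the boundary components of the annular Fatou components and their preimages — a countable collection — are Jordan curves. (iii) Show there are uncountably many point components by a symbolic-dynamics argument on the nested annular structure, and that these points form an uncountable set while the Jordan curves are only countably many; disconnectedness is then immediate. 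The main obstacle I anticipate is step~(i): correctly computing the connectivity of $U_0$ and showing it is \emph{exactly} $2$ (an annulus) rather than higher — this requires a precise accounting of how the two free critical orbits interact with the dynamics, using the symmetry (established in \S\ref{subsec-dyn-sym}) that the $n$ non-zero free critical points share the single value $v_1=3\lambda$, together with a delicate application of Riemann–Hurwitz along the full grand orbit of $U_0$. Once the annular structure is established, the Cantor-of-circles/Sierpiński-type combinatorics for the remaining components should follow the template in \cite{DLU} and \cite{PT}.
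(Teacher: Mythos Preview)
Your proposal has two genuine gaps, one in case~(1) and a more serious one in case~(3b).

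\textbf{Case (1).} You assert that once $v_0\in\MA(\infty)$, ``after pulling back far enough all the other free critical points land in $\MA(\infty)$''. This is the crux of the case and it is not automatic: the free critical points $c_k$ map to $v_1=3\lambda$, not to $v_0=-\lambda$, so no amount of pulling back $v_0$ reaches them. You need the separate implication $v_0\in\MA(\infty)\Rightarrow v_1\in\MA(\infty)$, and there is no soft reason for it. The paper obtains it by explicit estimates: for small $|\lambda|$ one shows directly that $f_\lambda$ maps a disk $\overline{\D}(-\lambda,\kappa|\lambda|)$ into itself, trapping $v_0$ in a bounded attracting basin (so $v_0\notin\MA(\infty)$); contrapositively, $v_0\in\MA(\infty)$ forces $|\lambda|$ above an explicit threshold, and above that threshold one checks $f_\lambda(\EC\setminus\D(0,3|\lambda|))\subset\EC\setminus\D(0,3|\lambda|)$, putting $v_1=3\lambda$ into $B_\infty$. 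This quantitative dichotomy (which genuinely uses $n\ge 3$; it fails for $n=2$) is precisely what your sketch is missing.

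\textbf{Case (3b).} Your step~(i) --- that the Fatou component $U_0$ containing both $v_0$ and $v_1$ is an annulus --- is false, and the rest of your plan rests on it. The quantitative lemma above shows that whenever $v_1\notin\MA(\infty)$ the parameter is small, and then $v_0$ lies in a \emph{completely invariant} attracting basin $B_0$; in case~(3b) this forces $U_0=B_0$. Now run Riemann--Hurwitz on $f_\lambda:B_0\to B_0$: the degree is $2n$ and $B_0$ contains $0$ (multiplicity $2n-1$) together with $c_1,\dots,c_n$ (each simple), so $(2n-1)\chi(B_0)=3n-1$, which has no integer solution for $n\ge 3$. The resolution is that $B_0$ is \emph{infinitely} connected, not an annulus, so your annular--combinatorics template from \cite{DLU} does not apply. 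The paper's route is entirely different: using the trapping disk $D_\lambda$ it builds a polynomial-like map $(U,V,f_\lambda)$ of degree $2n$ with $V=\EC\setminus\overline{D}_\lambda$, straightens to a polynomial $g$ of degree $2n$ having one super-attracting fixed point of local degree $n$ and $n$ escaping simple critical points, and then invokes the Branner--Hubbard/Qiu--Yin theorem \cite{QY} to see that every filled-Julia component of $g$ is a point except the countably many preimages of the component $K_0$ around the super-attracting point; a second polynomial-like renormalization shows $K_0$ is a closed Jordan disk. That is what produces ``countably many Jordan curves and uncountably many points''. Pilgrim--Tan \cite{PT} alone would not give you this precise count without first knowing which periodic Julia components are non-trivial.

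For cases~(2) and~(3a) your Riemann--Hurwitz-by-induction idea is in the right spirit but incomplete as stated: you must separately exclude Herman rings (Riemann--Hurwitz is vacuous on a rotation annulus), and in case~(2) you must split according to whether $v_1\in B_\infty$ or $v_1\in\MA(\infty)\setminus B_\infty$. The paper handles $v_1\in B_\infty$ by a polynomial-like construction (pulling back a Jordan disk in $B_\infty$ until it swallows $v_1$, then taking complements), and handles the remaining situations with the no-Herman-rings criterion of \cite{Ya} (at most one critical orbit in the Julia set) combined with Lemma~\ref{CFPCS}.
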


We will also give several typical examples to show that all the types of the Julia sets stated in Theorem \ref{main-thm} actually happen. The parameters
that correspond to the examples are chosen from the parameter plane of $f_\lambda$ with $n=3$. See Figure \ref{Fig_parameter-space}.

\begin{figure}[!htpb]
  \setlength{\unitlength}{1mm}
  \centering
  \includegraphics[height=80mm]{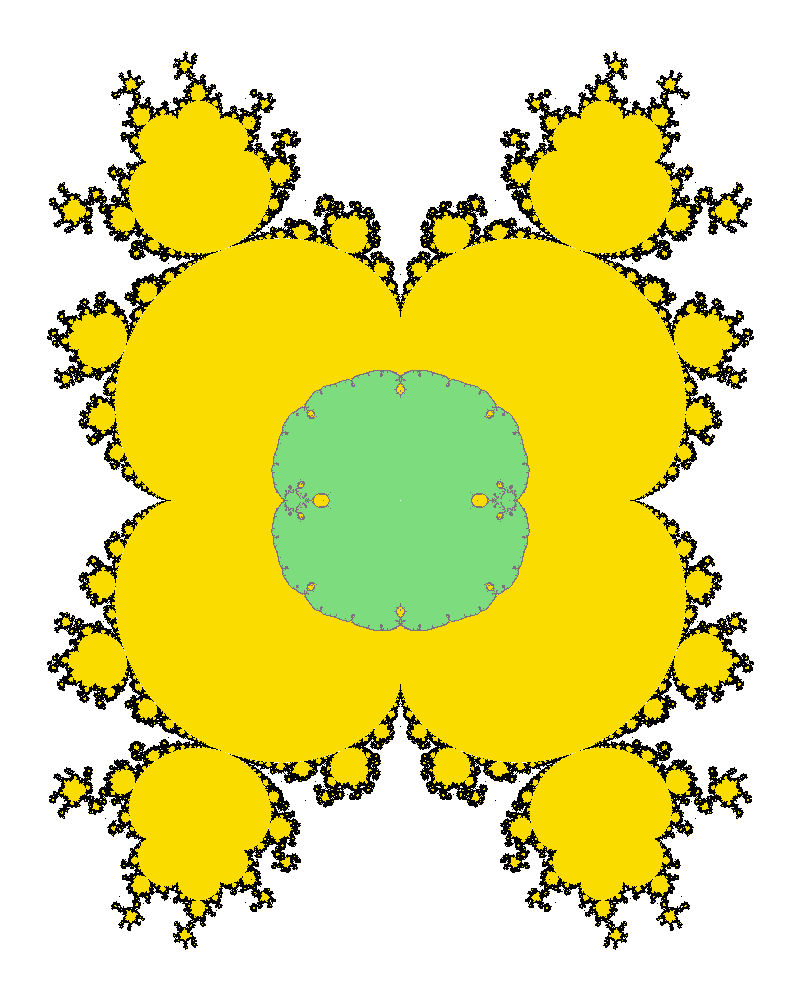}\quad
  \includegraphics[height=80mm]{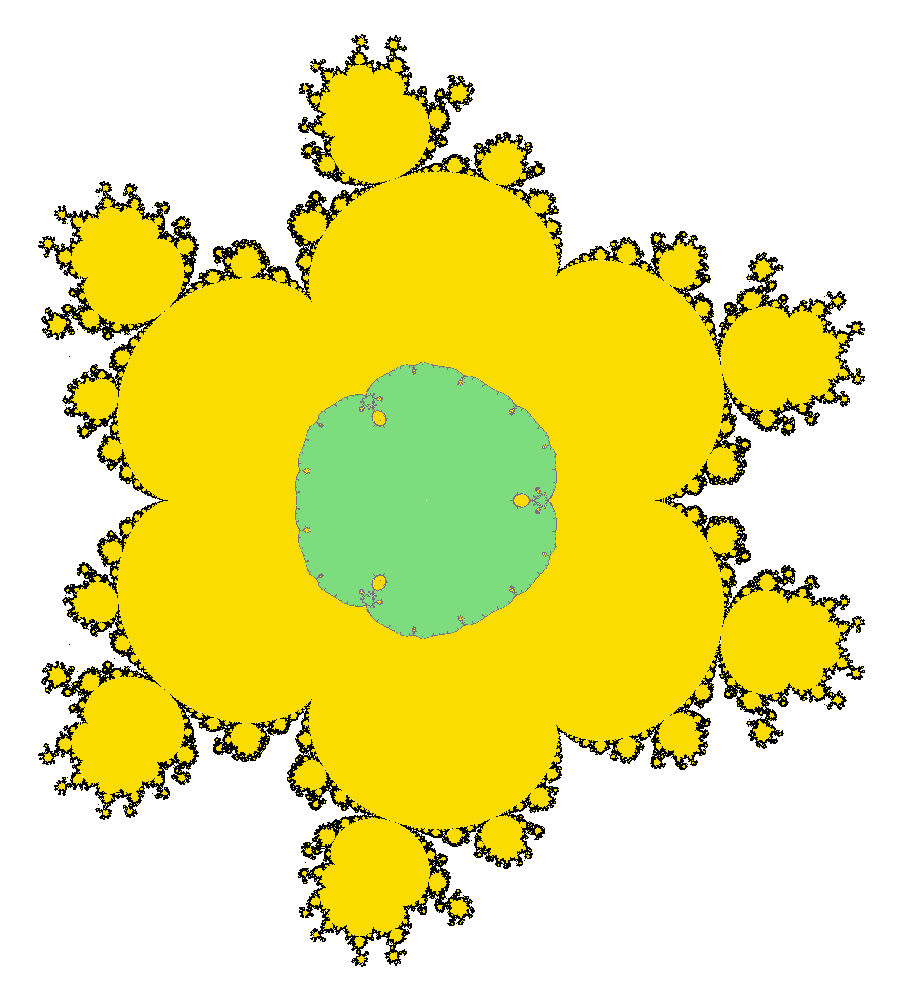}
  \caption{The parameter planes (i.e. $\lambda$-plane) of $f_\lambda$, where $n=3$ and $n=4$ (from left to right). In both pictures, the yellow and green parts denote the parameters $\lambda$ such that the free critical values $v_0$ and $v_1$, respectively, are not attracted by $\infty$. It can be seen from these pictures that the green parts are compactly contained in the yellow parts (some of the central yellow places are covered by the green parts).}
  \label{Fig_parameter-space}
\end{figure}

The Julia sets in Theorem \ref{main-thm} (3b) are called \textit{Cantor bubbles} (see the picture on the right of Figure \ref{Fig_Julia-thm3}). This kind of Julia sets has also been found by Devaney and Marotta in \cite{DM} for the rational maps $z\mapsto z^n+c/(z-a)^d$ when $|a|\neq 0,1$, $c$ is sufficiently small, and $1/n+1/d<1$.

A subset of the Riemann sphere $\EC$ is called a \textit{Cantor set of circles} (or \textit{Cantor circles} in short) if it consists of uncountably many closed Jordan curves which is homeomorphic to $\mathcal{C}\times \mathbb{S}^1$, where $\mathcal{C}$ is the Cantor middle third set and $\mathbb{S}^1$ is the unit circle. By definition, a \textit{Sierpi\'nski curve} is a planar set homeomorphic to the well-known Sierpi\'nski carpet fractal. From Whyburn \cite{Why}, it is known that any planar set which is compact, connected, locally connected, nowhere dense, and has the property that any two complementary domains are bounded by disjoint simple closed curves is homeomorphic to the Sierpi\'nski curve. It is known that the Cantor circles Julia sets and Sierpi\'nski curves Julia sets can appear in McMullen family and the generalized McMullen family (see \cite{DLU} and \cite{XQY}). However, for the family $f_\lambda$, it is proved that these two kind of Julia sets are not exist.

\begin{thm}\label{thm-no-type}
For any $n\geq 3$ and $\lambda\in\C^*$, the Julia set of $f_\lambda$ can never be a Cantor set of circles or a Sierpi\'{n}ski curve. Moreover, $f_\lambda$ has no Herman rings.
\end{thm}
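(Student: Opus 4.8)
The plan is to deduce Theorem~\ref{thm-no-type} from the classification of $J(f_\lambda)$ given in Theorem~\ref{main-thm}, together with two structural features of $f_\lambda$. First, $f_\lambda\circ\rho=f_\lambda$ for the rotation $\rho(z)=\omega z$ with $\omega$ a primitive $n$-th root of unity; equivalently $f_\lambda=g_\lambda\circ p_n$ with $p_n(z)=z^n$ and $g_\lambda(w)=w+\lambda^2/(w-\lambda)$ a quadratic rational map. In particular $\rho$ permutes the Fatou components of $f_\lambda$, fixes $0$, and permutes the $n$ poles $\{z:z^n=\lambda\}$ cyclically. Second, solving $f_\lambda(z)=v_0$ amounts to $z^n=0$, so $f_\lambda^{-1}(v_0)=\{0\}$ with local degree $2n=\deg f_\lambda$: the critical value $v_0=-\lambda$ is totally ramified and its only preimage is the critical point $0$ of local degree $2n$.

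We first handle Herman rings and the non-connected cases. If $J(f_\lambda)$ is a Cantor set (case~(1)), then $F(f_\lambda)=\EC\setminus J(f_\lambda)$ is connected, hence is the single Fatou component $\MA(\infty)$, so $f_\lambda$ has no Herman ring; being totally disconnected, $J(f_\lambda)$ is neither a Cantor set of circles nor a Sierpi\'nski curve. If $J(f_\lambda)$ is connected (cases~(2) and (3a)), then every Fatou component is simply connected, so again there is no Herman ring, and $J(f_\lambda)$ is not a Cantor set of circles; it remains only to prove (below) that it is not a Sierpi\'nski curve. If $J(f_\lambda)$ is a Cantor bubbles set (case~(3b)), then $v_0$ and $v_1$ lie in one Fatou component $U$, which is eventually periodic. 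Since the boundary of a periodic rotation domain is a perfect set contained in the postcritical set, the eventual periodic cycle of $U$ cannot be a cycle of rotation domains: otherwise the forward orbits of $v_0$ and $v_1$ would both be confined to the interiors of that single cycle and could not accumulate on its boundary. Hence that cycle is attracting, super-attracting or parabolic; the postcritical set of $f_\lambda$ is then a countable set with only finitely many accumulation points, and so $f_\lambda$ has no Herman ring at all, there being no critical orbit that could accumulate on the boundary of one. Finally a Cantor bubbles set is disconnected, hence not a Sierpi\'nski curve, and has only countably many connected components that are Jordan curves, hence is not homeomorphic to $\mathcal{C}\times\mathbb S^1$.

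It remains to rule out, in cases~(2) and (3a), that the connected set $J(f_\lambda)$ is a Sierpi\'nski curve. Suppose it were. Then $v_0$ lies in a Fatou component $U_0$ (one checks $v_0\notin J(f_\lambda)$ here, using the structure established in the proof of Theorem~\ref{main-thm}), and because $f_\lambda^{-1}(v_0)=\{0\}$ with local degree $2n$, the component $W_0\ni 0$ is the \emph{only} component of $f_\lambda^{-1}(U_0)$ and $f_\lambda\colon W_0\to U_0$ is proper of degree $2n$. As $J(f_\lambda)$ is connected, $U_0$ and $W_0$ are simply connected, and Riemann--Hurwitz then forces: $0$ is the unique critical point of $f_\lambda$ in $W_0$; $W_0\neq U_0$ (equality would make $f_\lambda|_{U_0}$ conformally conjugate to $z\mapsto z^{2n}$ with $0$ its fixed critical point, so $\lambda=0$); and $f_\lambda$ restricts to a proper degree-$2n$ branched covering $\EC\setminus\overline{W_0}\to\EC\setminus\overline{U_0}$ whose only critical points are $\infty$ and the $n$ simple free critical points. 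On the other hand, a Sierpi\'nski curve Julia set has all its Fatou components Jordan domains with pairwise disjoint closures, so $\overline{W_0}$ and $\overline{U_0}$ would be disjoint Jordan domains.

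The plan for the contradiction is to follow the grand orbit of $U_0$ — whose preimage tree begins with the single component $W_0$ and only afterwards ramifies — together with the position of the other free critical value $v_1$ and of $\MA(\infty)$: using that $\rho$ fixes $W_0$ while permuting the immediate basin $A^*$ of $\infty$ and the $n$ trap doors around the poles, one should be able to force $\overline{W_0}$, one of its iterated preimages, or $\overline{U_0}$ to meet the closure of another Fatou component, or else force $A^*$ not to be a Jordan domain, contradicting the Sierpi\'nski structure. The main obstacle is exactly this last step: turning the rigid, totally ramified behaviour of $f_\lambda$ near $W_0$ into a failure of the ``Jordan domains with disjoint closures'' property. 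I expect it to require a careful analysis of how the Fatou component of $v_1$ lies relative to $W_0$ and to $\MA(\infty)$ — and, in case~(3a), of the finitely many Fatou components carrying the free critical values — very much in the spirit of the proof of Theorem~\ref{main-thm}. Granting this step, the remaining verifications are routine.
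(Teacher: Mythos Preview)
Your treatment of Cantor circles and of Herman rings is essentially correct and close in spirit to the paper's, though in case~(3b) the paper is more direct: there both free critical values lie in the attracting basin $B_0$, so \emph{all} critical points lie in the Fatou set and Lemma~\ref{lema-no-Herman} applies immediately.

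The genuine gap is the Sierpi\'nski exclusion in cases~(2) and~(3a). You set up a promising picture---the totally ramified preimage $W_0\ni 0$ of the Fatou component $U_0\ni v_0$, the $\rho$-symmetry, the trap doors---but you then explicitly leave the contradiction unfinished (``Granting this step\dots''). There is also a hidden assumption: you need $v_0$ to lie in a Fatou component, but in case~(2) with $v_1\in B_\infty$ nothing in Theorem~\ref{main-thm} forces $v_0\notin J(f_\lambda)$, so your argument does not even get started there.

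The paper avoids this difficulty entirely by an observation you are close to but do not use: the Julia set of a \emph{polynomial} $P$ can never be a Sierpi\'nski curve, because $J(P)=\partial\MA(\infty)$, so any bounded Fatou component $U$ satisfies $\partial U\subset\partial\MA(\infty)$ and hence $\overline{U}\cap\overline{\MA(\infty)}\neq\emptyset$, violating Whyburn's disjoint-closures criterion. Via the Straightening Theorem the same is true of any polynomial-like mapping. The paper then recycles the polynomial-like mappings already built in the proof of Theorem~\ref{main-thm}: if $v_1\in B_\infty$ one has the mapping $(U,V,f_\lambda)$ from case~(2)(i); if $v_1\notin B_\infty$ then Lemma~\ref{lema-our-case} puts $\lambda$ in the small range of Lemma~\ref{lema-domain-a-1}, and one takes $V=\EC\setminus\overline{D}_\lambda$, $U=\EC\setminus f_\lambda^{-1}(\overline{D}_\lambda)$ with $D_\lambda=\D(-\lambda,|\lambda|/5)$. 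In either case the Julia set of $(U,V,f_\lambda)$ coincides with $J(f_\lambda)$, and the polynomial lemma finishes the proof. This replaces your unfinished topological chase with a one-line structural fact.
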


One can refer \cite{QYY} for the comprehensive study on the rational maps whose Julia sets are Cantor circles. The first example of the Sierpi\'{n}ski curve as the Julia set of a rational map was given in \cite[Appendix F]{Mi1}. For more rational maps whose Julia sets are Sierpi\'{n}ski curves, see \cite{DFGJ}. For the study of non-existence of Herman rings for rational families, see \cite{XQ}, \cite{Ya} and the references therein.

\subsection{Organization of the article}

The article is organized as follows:

In \S \ref{Prelim}, we introduce the family $f_\lambda$ and present some basic properties of $f_\lambda$. Some useful lemmas which are necessary in the proofs of our theorems are also prepared.

In \S \ref{Escape}, we describe the Julia set of $f_\lambda$ for the case that the free critical value $v_0$ is attracted by $\infty$ and show that under this assumption, $v_1$ is also attracted by $\infty$ and the Julia set of $f_\lambda$ is a Cantor set.

In \S \ref{SEscape}, we discuss the case that the super-attracting fixed point $\infty$ attracts exactly one free critical value $v_1$ and prove that $J(f_\lambda)$ is connected.

In \S \ref{main-thmape}, we deal with the case that neither $v_0$ nor $v_1$ are attracted by the super-attracting basin of $\infty$ and show $J(f_\lambda)$ is either connected or a set of Cantor bubbles.

At the end of \S \ref{Escape} to \S \ref{main-thmape}, we also give typical examples to show that the Julia sets appeared in Theorem \ref{main-thm} actually happen.

In \S \ref{sec-impo}, we prove Theorem \ref{thm-no-type} by constructing several polynomial-like mappings.

In the last section, we make some comments on $f_\lambda$ with $n=2$. We conjecture that Theorems \ref{main-thm} and \ref{thm-no-type} hold also in this case. However, we cannot give a proof here. Comparing Figures \ref{Fig_parameter-space} and \ref{Fig_parameter-n-2}, there is a slight difference between them: the green part is compactly contained in the yellow part if $n\geq 3$ while these two parts have non-empty intersection on their boundaries if $n=2$. This is the essential obstruction that we cannot use the techniques in this article to deal with the case $n=2$.

\vskip0.2cm
\noindent\textbf{Acknowledgements.} The first author is supported by the NSFC (Nos. \,11301165, 11371126, 11571099) and the program of CSC (2015/2016). He also wants to acknowledge the Department of Mathematics, Graduate School of the City University of New York for its hospitality during his visit in 2015/2016. The second author is supported by the NSFC (No.\,11401298) and the NSF of Jiangsu Province (No.\,BK20140587). We would like to thank the referee for careful reading and useful suggestions.

\section{Preliminaries}\label{Prelim}

In this section, we prepare some preliminary results. We first give the symmetric distribution of the critical points and the symmetric dynamical behaviors of $f_\lambda$. Then we consider the topological properties of the immediate super-attracting basin of $\infty$. In the rest of this article, we always assume that $n\geq 3$ is an integer if there is no other special instruction.

\subsection{Dynamical Symmetries}\label{subsec-dyn-sym}

As pointed out in the introduction, the rational map
\begin{equation}\label{equ-family-other}
f_\lambda(z)= z^n+\frac{\lambda^2}{z^n-\lambda}=\frac{z^{2n}-\lambda z^n+\lambda^2}{z^n-\lambda}
\end{equation}
has a super-attracting fixed point $\infty$, which is also a critical point of $f_\lambda$ with multiplicity $n-1$. A direct calculation shows that
\begin{equation}\label{equ-f-dev}
f_\lambda'(z)=nz^{2n-1}\cdot\frac{z^n-2\lambda }{(z^n-\lambda )^2}.
\end{equation}
It is easy to see that the origin is another critical point of $f_\lambda(z)$ with multiplicity $2n-1$. The rest $n$ critical points of $f_\lambda$ have the form
\begin{equation*}
\{c_k:=\omega^{k-1}\sqrt[n]{2\lambda }:1\leq k\leq n\}, \text{~where~}\omega=e^{2\pi\ii/n}.
\end{equation*}
However, except $\infty$, there are only two critical values for these critical points. They are
\begin{equation}\label{free-cv}
v_0:=f_\lambda(0)=-\lambda  \text{ and } v_1:=f_\lambda(c_{k})=3\lambda \text{ for } 1\leq k\leq n.
\end{equation}
In this article, we call $0$, $c_k$ the \emph{free} critical points, and $v_0,v_1$ the \emph{free} critical values of $f_\lambda$. The dynamics of $f_\lambda$ is determined by the orbits of these two free critical values. Since the local degree of $f_\lambda$ at the origin is $2n$ and the local degree of $f_\lambda$ is two at every free critical point $c_k$, we have \begin{equation}\label{equ-v-c-inv}
f^{-1}_\lambda(v_0)=\{0\} \text{ and } f^{-1}_\lambda(v_1)=\{c_k:1\leq k\leq n\}.
\end{equation}

Recall that $B_\infty$ is the immediate super-attracting basin of $\infty$. Let $U$ be a subset of $\EC$ and $\alpha\in\C$. We denote $\alpha U:=\{\alpha z:z\in U\}$. The proof of the following lemma is straightforward.

\begin{lema}\label{lema1}
Let $\omega$ be a complex number satisfying $\omega^n=1$ and suppose that $U$ is a Fatou component of $f_\lambda$. Then

$(1)$ $f_\lambda(\omega z)=f_\lambda(z)$ and $\omega U$ is also a Fatou component of $f_\lambda$.

$(2)$ The basin $B_\infty$ has \emph{$n$-fold symmetry}, i.e. $z\in B_\infty$ if and only if $\omega z\in B_\infty$.

$(3)$ Let $U$ be a Fatou component of $f_\lambda$ which is different from $B_\infty$. Then either $U$ has $n$-fold symmetry and surrounds the origin, or $\omega U$, $\omega^2 U$, $\cdots$, $\omega^n U=U$ are pairwise disjoint, where $\omega=e^{2\pi\ii/n}$.
\end{lema}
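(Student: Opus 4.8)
The plan is to verify the three parts of Lemma~\ref{lema1} directly from the explicit formula \eqref{equ-family-other}, exploiting the fact that $f_\lambda$ depends on $z$ only through $z^n$. For part $(1)$, since $\omega^n=1$ we have $(\omega z)^n=z^n$, so substituting $\omega z$ into $f_\lambda(z)=z^n+\lambda^2/(z^n-\lambda)$ leaves the expression unchanged; hence $f_\lambda\circ m_\omega=f_\lambda$, where $m_\omega(z)=\omega z$. Because $m_\omega$ is a M\"obius transformation (in fact a rotation of $\EC$ fixing $0$ and $\infty$) and $f_\lambda$ is semiconjugate to itself via $m_\omega$ in the sense $f_\lambda\circ m_\omega=f_\lambda=m_{\omega}^{-1}\circ(m_\omega\circ f_\lambda)$, one checks that $m_\omega$ maps the Fatou set onto itself: if $\{f_\lambda^k\}$ is normal on a neighborhood of $z$, then $\{f_\lambda^k\circ m_\omega\}=\{f_\lambda^k\}$ restricted to $m_\omega^{-1}$ of that neighborhood is normal, so $m_\omega^{-1}(z)$ is in the Fatou set too; alternatively, note $f_\lambda^k\circ m_\omega = f_\lambda^{k-1}\circ(f_\lambda\circ m_\omega)=f_\lambda^{k-1}\circ f_\lambda=f_\lambda^k$ for $k\geq 1$, so actually $f_\lambda^k=f_\lambda^k\circ m_\omega$ on all of $\EC$ for every $k\geq 1$, and normality is a local property invariant under the homeomorphism $m_\omega$. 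Thus $m_\omega$ permutes the Fatou components, proving $\omega U$ is a Fatou component whenever $U$ is.

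For part $(2)$, the basin $B_\infty$ is the Fatou component containing $\infty$, and since $m_\omega(\infty)=\infty$, part $(1)$ gives $m_\omega(B_\infty)=\omega B_\infty$ is a Fatou component containing $\infty$; by uniqueness of the component through $\infty$, $\omega B_\infty=B_\infty$. This is precisely the statement that $z\in B_\infty\iff\omega z\in B_\infty$. (One can also deduce it from $f_\lambda^k=f_\lambda^k\circ m_\omega$, since then $f_\lambda^k(z)\to\infty$ iff $f_\lambda^k(\omega z)\to\infty$.)

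For part $(3)$, let $U\neq B_\infty$ be a Fatou component. The cyclic group $\langle m_\omega\rangle$ of order $n$ acts on the set of Fatou components by part $(1)$, so the orbit of $U$ under this action, namely $\{U,\omega U,\dots,\omega^{n-1}U\}$, has cardinality dividing $n$; say it has cardinality $n/d$ where $d\mid n$ and $d\geq 1$ is the size of the stabilizer. If $d=1$ the orbit consists of $n$ pairwise distinct components; since distinct Fatou components are disjoint, $U,\omega U,\dots,\omega^{n-1}U$ are pairwise disjoint, which is the second alternative. If $d\geq 2$, then $\omega^{n/d}U=U$, i.e. $U$ is invariant under the rotation $m_{\omega^{n/d}}$ of order $d$; a connected open subset of $\EC$ invariant under a nontrivial finite rotation about $0$ and $\infty$ must separate $0$ from $\infty$ (otherwise it lies in one of the complementary components of a ray, contradicting rotation-invariance) — here I should note $0,\infty\notin U$ because $\infty\in B_\infty\neq U$ and $0$ is a critical point whose forward orbit passes through $v_0=-\lambda$, and in any case $0\notin U$ follows since $U$ being invariant under a fixed-point-free-on-$\C^*$ rotation cannot contain an isolated rotation-fixed point unless that point is in $U$, but then the rotation would have to be accounted for; the cleanest route is: if $0\in U$ then by invariance and connectedness $U$ is a rotation-invariant neighborhood of $0$, which is fine, but then to get $n$-fold symmetry I need the stabilizer to be all of $\langle m_\omega\rangle$. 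The main obstacle is precisely this last point: upgrading ``$U$ is invariant under some rotation of order $d\geq 2$'' to the claimed dichotomy, i.e. showing that if $U$ is not one of $n$ disjoint components then its stabilizer is the full group and it surrounds $0$. I expect this to follow from a topological argument: a rotation-invariant connected open set $U$ that is not all of $\C^*$-ish must be an ``annular'' region around $0$, and then its two complementary pieces (one containing $0$, one containing $\infty$) are each invariant, forcing $U$ to be invariant under the whole group $\langle m_\omega\rangle$ since the group permutes the finitely many Fatou components and the component containing $\infty$ is the fixed $B_\infty$; combining with the fact that a component surrounding $0$ is unique in its $m_\omega$-orbit of surrounding components gives full $n$-fold symmetry. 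I would spell this topological lemma out carefully, as it is the only non-formal step; everything else is a direct consequence of the functional identity $f_\lambda\circ m_\omega=f_\lambda$.
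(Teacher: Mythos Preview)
Your arguments for parts (1) and (2) are correct and clean; indeed, once you observe $f_\lambda^{k}\circ m_\omega=f_\lambda^{k}$ for all $k\geq 1$, the invariance of the Fatou set under $m_\omega$ and the $n$-fold symmetry of $B_\infty$ are immediate. The paper omits the proof entirely (calling it ``straightforward''), so there is no alternative approach to compare against.

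Part (3), however, is incomplete, and you correctly flag the trouble spot yourself. The orbit-stabilizer argument only tells you the stabilizer of $U$ has order $d\mid n$; when $n$ is composite you must still exclude $1<d<n$. Your final paragraph gestures at a fix but does not supply one: the claim that ``a component surrounding $0$ is unique in its $m_\omega$-orbit of surrounding components'' is precisely what needs proof, and the sentence about complementary pieces forcing full invariance does not go through as written. Here is a short argument that closes the gap. First, if $0\in U$ then $\omega\cdot 0=0\in\omega U$, so $\omega U=U$ and we are done. If $0\notin U$ and $\omega^{j}U=U$ for some $j\not\equiv 0\pmod n$, pick $z_0\in U$ and a path $\alpha$ in $U$ from $z_0$ to $\omega^{j}z_0$; concatenating the rotates $\alpha,\omega^{j}\alpha,\omega^{2j}\alpha,\dots$ produces a closed loop in $U$ whose winding number about $0$ is congruent to $1\pmod{n/\gcd(j,n)}$, hence nonzero. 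Perturbing inside the open set $U$, we obtain a \emph{Jordan} curve $\gamma\subset U$ surrounding $0$. Now suppose $\omega U\neq U$; then $\gamma$ and $\omega\gamma$ are disjoint Jordan curves each enclosing $0$, so one lies in the bounded Jordan domain of the other. Applying $m_\omega$ repeatedly gives a strictly nested chain $\gamma\supsetneq\omega\gamma\supsetneq\cdots\supsetneq\omega^{n}\gamma=\gamma$ (or the reverse nesting), a contradiction. Hence $\omega U=U$, which is the full $n$-fold symmetry, and $U$ surrounds the origin via $\gamma$.
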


In this article, we need to prove that some domains are simply connected and the following formula is very useful.

\begin{lema}[{Riemann-Hurwitz's formula, \cite[\S 5.4, pp.\,85-89]{Bea}}]\label{lema-Riem-Hur}
Let $f$ be a rational map defined from $\EC$ to itself. Assume that

\textup{(1)} $V$ is a domain in $\EC$ with finitely many boundary components;

\textup{(2)} $U$ is a component of $f^{-1}(V)$; and

\textup{(3)} there are no critical values of $f$ on $\partial V$.

Then there exists an integer $d\geq 1$ such that $f$ is a branched covering map from $U$ onto $V$ with degree $d$ and
\begin{equation*}
\chi(U)+\delta_f(U)=d\cdot\chi(V),
\end{equation*}
where $\chi(\cdot)$ denotes the Euler characteristic and $\delta_f(U)$ denotes the total number of the critical points of $f$ in $U$ (counted with multiplicity).
\end{lema}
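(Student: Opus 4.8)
The plan is to establish the formula by the classical Euler-characteristic bookkeeping for branched coverings; since the statement is standard (see \cite[\S5.4]{Bea}) I only sketch the argument.

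\emph{Step 1 (properness and the degree $d$).} First I would check that $f|_U\colon U\to V$ is a proper branched covering of some finite degree $d\ge 1$. Write $W:=f^{-1}(V)$, an open set, and recall that $f\colon\EC\to\EC$ is a non-constant rational map, hence a finite branched self-cover of the compact sphere. Since $U$ is a connected component of $W$ one has $\partial U\subseteq\partial W\subseteq f^{-1}(\partial V)$. Consequently $f(U)$, which is open because $f$ is an open map, is also closed in $V$: a point $w\in V\cap\overline{f(U)}$ is a limit $w=\lim f(z_j)$ with, after passing to a subsequence, $z_j\to z\in\overline U$; then $f(z)=w\in V$ forces $z\notin f^{-1}(\partial V)$, so $z\in U$ and $w\in f(U)$. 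As $V$ is connected, $f(U)=V$. Moreover any compact $K\subseteq V$ satisfies $f^{-1}(K)\cap\overline U=f^{-1}(K)\cap U$ because $f(\partial U)\subseteq\partial V$ is disjoint from $K$; hence $f^{-1}(K)\cap U$ is compact and $f|_U$ is proper. A proper non-constant holomorphic map between connected Riemann surfaces is a branched covering of some degree $d$: over the complement of its finite critical set it is a genuine $d$-sheeted cover, and near each critical point it has the normal form $w\mapsto w^k$ with $k-1$ the local multiplicity. Hypothesis (3) is used precisely here, to guarantee that all of the finitely many critical values of $f|_U$ lie in the open set $V$, so that this structure is valid throughout $V$.

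\emph{Step 2 (lifting a cell decomposition).} By hypothesis (1), $V$ has finite Euler characteristic and admits a finite cell decomposition. I would choose one, with $N_0$ vertices, $N_1$ edges and $N_2$ faces, subject to two requirements: every critical value of $f|_U$ is a vertex, and (after enough subdivision) every closed edge and every closed face lies in an open subset of $V$ over which $f|_U$ is a trivial covering. Pulling this decomposition back by $f|_U$ yields a cell decomposition of $U$. Over a face or an edge the cover is trivial, so $U$ has exactly $d\,N_2$ faces and $d\,N_1$ edges. A vertex $p$ that is not a critical value has $d$ preimages in $U$; a vertex $p$ that is a critical value has $\sum_{q\in(f|_U)^{-1}(p)}1=d-\sum_{q}(\deg_q f-1)$ preimages, using $\sum_q\deg_q f=d$. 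Summing over all vertices, $U$ has $d\,N_0-\delta_f(U)$ vertices, where $\delta_f(U)$ is the total number of critical points of $f$ in $U$ counted with multiplicity.

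\emph{Step 3 (conclusion).} Combining the counts,
\begin{equation*}
\chi(U)=\bigl(d\,N_0-\delta_f(U)\bigr)-d\,N_1+d\,N_2=d\,(N_0-N_1+N_2)-\delta_f(U)=d\cdot\chi(V)-\delta_f(U),
\end{equation*}
which rearranges to $\chi(U)+\delta_f(U)=d\cdot\chi(V)$.

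The step I expect to demand the most care is Step 2: justifying that a domain $V\subseteq\EC$ with finitely many --- but possibly quite irregular --- boundary components genuinely carries a finite cell decomposition which can be taken compatible with the branched covering $f|_U$, i.e.\ fine enough to trivialize the cover over each cell while keeping the critical values among the vertices. This is the technical core of the treatment in \cite[\S5.4, pp.\,85--89]{Bea}; granting it, the counting in Steps 2--3 is routine. The properness argument and the local normal form in Step 1 are standard but worth recording, since without hypothesis (3) a critical value on $\partial V$ could alter the degree of $f|_U$ over $V$ and the formula would break down.
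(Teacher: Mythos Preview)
Your sketch is correct and follows the standard textbook argument: properness of $f|_U$, lifting a suitable cell decomposition, and the Euler-characteristic count. There is nothing to compare against, however, because the paper does not supply its own proof of this lemma --- it is stated as a quotation of \cite[\S5.4, pp.\,85--89]{Bea} and used as a black box, followed only by the remark on the values $\chi(D)=2,1,0$. So your proposal goes beyond what the paper does; it essentially reconstructs the argument that Beardon gives, and your identification of Step~2 (existence of a compatible finite cell decomposition when $\partial V$ may be irregular) as the only genuinely delicate point is accurate.
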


\begin{rmk}
Let $D$ be a domain in $\EC$. Then $\chi(D)=2$ if and only if $D$ is the Riemann sphere $\EC$; $\chi(D)=1$ if and only if $D$ is simply connected; and $\chi(D)=0$ if and only if $D$ is doubly connected (i.e. an annulus).
\end{rmk}

\subsection{The topological structure of $B_\infty$}

A simply connected domain in $\EC$ is called a \emph{Jordan domain} if its boundary is a Jordan curve.

\begin{lema}\label{connectivity}
Let $U\subset\EC$ be a simply connected domain which contains exactly one free critical value $v_0$. Then the preimage $f_\lambda^{-1}(U)$ is a simply connected domain containing $0$ on which the degree of the restriction of $f_\lambda$ is $2n$.
\end{lema}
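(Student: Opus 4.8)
The plan is to pull the domain $U$ back under $f_\lambda$ and then read off the topology of the preimage from the Riemann--Hurwitz formula (Lemma~\ref{lema-Riem-Hur}). The two structural inputs that make this work are recorded in \eqref{equ-v-c-inv}: the free critical value $v_0$ has a \emph{single} preimage, namely $0$, at which $f_\lambda$ has local degree $2n$, while $v_1$ has exactly the $n$ preimages $c_1,\dots,c_n$. The first fact forces $f_\lambda^{-1}(U)$ to be connected and to contain $0$ with its full multiplicity; the second, together with $v_1\notin U$, keeps the $c_k$ out of the preimage.

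First I would check that $f_\lambda^{-1}(U)$ is a single component. The restriction $f_\lambda\colon f_\lambda^{-1}(U)\to U$ is proper, since preimages under a rational map of compact subsets of $\EC$ are compact; hence, $U$ being connected, each connected component of $f_\lambda^{-1}(U)$ maps onto $U$ (its image is open because $f_\lambda$ is open, and closed in $U$ by properness). Therefore each component contains a preimage of $v_0$; since $f_\lambda^{-1}(v_0)=\{0\}$, there is only one component, call it $W$, and $W=f_\lambda^{-1}(U)\ni 0$. As every preimage of every point of $U$ then lies in $W$, the proper map $f_\lambda\colon W\to U$ has the same degree as $f_\lambda\colon\EC\to\EC$, namely $2n$.

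Next I would run Riemann--Hurwitz. The critical points of $f_\lambda$ are $\infty$ (multiplicity $n-1$), $0$ (multiplicity $2n-1$) and the simple points $c_1,\dots,c_n$, with critical values $\infty$, $v_0$ and $v_1$ respectively. By hypothesis $v_0\in U$ and $v_1\notin U$; assuming in addition $\infty\notin U$ — which holds in all applications of the lemma and is genuinely needed — the point $0$ is the only critical point of $f_\lambda$ in $W$, so $\delta_{f_\lambda}(W)=2n-1$. Since $U$ is simply connected, $\chi(U)=1$, and Lemma~\ref{lema-Riem-Hur} with $d=2n$ gives
\[
\chi(W)=d\cdot\chi(U)-\delta_{f_\lambda}(W)=2n-(2n-1)=1,
\]
so $W$ is simply connected, as claimed.

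The point requiring care — and where I expect the bookkeeping to go — is the hypothesis of Lemma~\ref{lema-Riem-Hur} that no critical value lies on $\partial V$: $v_0\in U$ is fine, but $v_1$ (and $\infty$) might a priori lie on $\partial U$. I would handle this by exhausting $U$ from inside by an increasing sequence of simply connected subdomains $U_j\ni v_0$ with $v_1,\infty\notin\overline{U_j}$, applying the above to each $U_j$ to get simply connected preimages $W_j\subset\C$, and then letting $j\to\infty$: $f_\lambda^{-1}(U)=\bigcup_j W_j$ is an increasing union of simply connected planar domains, hence connected and simply connected, with degree over $U$ stabilising at $2n$. Apart from this routine approximation the argument is short; the real content is the use of \eqref{equ-v-c-inv} to pin down which critical points land in the pullback.
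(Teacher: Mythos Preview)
Your argument is correct and follows the same route as the paper's proof: use $f_\lambda^{-1}(v_0)=\{0\}$ to force connectedness of the preimage, then apply Riemann--Hurwitz to conclude simple connectivity and degree $2n$. You supply more detail than the paper does---in particular the properness argument for surjectivity of each component, the implicit hypothesis $\infty\notin U$, and the approximation step to handle possible critical values on $\partial U$---but the strategy is identical.
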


\begin{proof}
Note that the simply connected domain $U$ contains exactly one critical value $v_0=-\lambda$ and $f_\lambda^{-1}(v_0)=\{0\}$. This means that $f_\lambda^{-1}(U)$ is a connected set containing $0$. By Riemann-Hurwitz's formula (Lemma \ref{lema-Riem-Hur}), it follows that $f_\lambda^{-1}(U)$ is also simply connected on which the degree of the restriction of $f_\lambda$ is $2n$.
\end{proof}

\begin{prop}\label{inBandT}
If $B_\infty$ contains at least one free critical value, then $B_\infty$ is completely invariant and $J(f_\lambda)=\partial B_\infty$.
\end{prop}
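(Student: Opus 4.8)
The plan is to show that under the hypothesis, every preimage of $B_\infty$ coincides with $B_\infty$ itself, so that $B_\infty$ is completely invariant; the identity $J(f_\lambda)=\partial B_\infty$ then follows from the standard fact that the boundary of a completely invariant Fatou component is the whole Julia set. First I would note that $f_\lambda^{-1}(B_\infty)$ is a union of Fatou components, each mapped onto $B_\infty$ by $f_\lambda$. Since $\infty\in B_\infty$ is a critical point of multiplicity $n-1$ and $f_\lambda(\infty)=\infty$, the component $B_\infty$ is itself one of these preimage components. The goal is to rule out the existence of any \emph{other} preimage component.

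The key is a critical-point count via the Riemann--Hurwitz formula (Lemma \ref{lema-Riem-Hur}). The map $f_\lambda$ has degree $2n$, hence $4n-2$ critical points counted with multiplicity; these are $\infty$ (multiplicity $n-1$), the origin (multiplicity $2n-1$), and the $n$ simple critical points $c_k$. I would split into the two cases of the hypothesis. Case $v_0\in B_\infty$: since $f_\lambda^{-1}(v_0)=\{0\}$ by \eqref{equ-v-c-inv}, the origin lies in some preimage component $U_0$ of $B_\infty$; but then $U_0$ accounts for a branch point of multiplicity $2n-1$, and together with $B_\infty$ (which contains $\infty$, multiplicity $n-1$) this already uses up $3n-2$ of the available critical multiplicity. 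If $U_0\neq B_\infty$, then applying Riemann--Hurwitz to $B_\infty$ alone, with $d_1=\deg(f_\lambda|_{B_\infty})$, gives $1+\delta_f(B_\infty)=d_1$, so $d_1\geq n$; similarly the component $U_0$ has degree $d_0\geq 2n$; but $d_0+d_1\leq 2n$ forces a contradiction unless $U_0=B_\infty$. Case $v_1\in B_\infty$ (with $v_0$ possibly not): here $f_\lambda^{-1}(v_1)=\{c_1,\dots,c_n\}$, and by the $n$-fold symmetry of $B_\infty$ (Lemma \ref{lema1}(2)) all $c_k$ lie in $\omega^{k-1}$-rotates of a single preimage component; if that component is not $B_\infty$ one argues that the $n$ simple critical points plus $\infty$ over-count the degree available to $B_\infty$ and its rotated preimage, again contradicting $\sum d_i\le 2n$. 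In either case one concludes $f_\lambda^{-1}(B_\infty)=B_\infty$.

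Once complete invariance is established, $J(f_\lambda)=\partial B_\infty$ follows: $\partial B_\infty$ is closed, completely invariant, and infinite, hence contains $J(f_\lambda)$; conversely $\partial B_\infty\subset J(f_\lambda)$ since $B_\infty$ is a Fatou component, so $\partial B_\infty=J(f_\lambda)$.

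The main obstacle I anticipate is the bookkeeping in the sub-case where $v_1\in B_\infty$ but $v_0\notin B_\infty$: one must carefully track how the $n$ symmetric critical points $c_k$ and the origin are distributed among preimage components, and make sure the Riemann--Hurwitz count over \emph{all} preimage components of $B_\infty$ (not just $B_\infty$ itself) is tight enough to force uniqueness. In particular one should verify that no preimage component can be multiply connected (which would cost negative Euler characteristic and relax the degree bound); here the hypothesis that $B_\infty$ contains a critical \emph{value} — so that the relevant preimages are forced to absorb high-multiplicity critical points — is exactly what makes the arithmetic work.
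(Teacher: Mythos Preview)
Your degree-counting argument is correct in outline, but you are working harder than necessary and leaning on Riemann--Hurwitz where you do not need it. The paper's proof is essentially two lines, built on one observation you never quite isolate: since $B_\infty$ is a \emph{fixed} Fatou component, the restriction $f_\lambda:B_\infty\to B_\infty$ is surjective. Hence if $v_0\in B_\infty$ then some preimage of $v_0$ already lies in $B_\infty$; but $f_\lambda^{-1}(v_0)=\{0\}$, so $0\in B_\infty$. Every preimage component of $B_\infty$ surjects onto $B_\infty$ and therefore must contain a preimage of $v_0$; since the only such preimage is $0\in B_\infty$, there is only one preimage component and complete invariance follows. The $v_1$ case is identical: surjectivity forces some $c_k\in B_\infty$, the $n$-fold symmetry of $B_\infty$ (Lemma~\ref{lema1}(2)) then places all $c_k$ in $B_\infty$, and the same one-line argument finishes. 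The identity $J(f_\lambda)=\partial B_\infty$ is then simply cited from Milnor.

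Two specific points on your write-up. First, your appeal to Riemann--Hurwitz in the form ``$1+\delta_f(B_\infty)=d_1$'' presupposes both that $\chi(B_\infty)=1$ and that Lemma~\ref{lema-Riem-Hur} applies to $V=B_\infty$ (finitely many boundary components, no critical values on $\partial B_\infty$); neither is established a priori. Fortunately you do not need the formula at all: the bounds $d_1\ge n$ and $d_0\ge 2n$ come for free from the local degrees at $\infty$ and at $0$, and the contradiction $d_0+d_1>2n$ in the $v_0$ case is then immediate. Second, the ``main obstacle'' you flag in the $v_1$ case --- tracking how the symmetric $c_k$ distribute among possibly multiply connected preimage components --- dissolves entirely once you use surjectivity of $f_\lambda|_{B_\infty}$ as above: the moment one $c_k$ lands in $B_\infty$, the symmetry of $B_\infty$ itself forces them all in, and no bookkeeping over other components is required.
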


\begin{proof}
Suppose that $v_0\in B_\infty$. Since $f^{-1}_\lambda(v_0)=\{0\}$ and $f_\lambda(B_\infty)=B_\infty$, we have $0\in B_\infty$ and $B_\infty$ is completely invariant. If $v_1\in B_\infty$, then $B_\infty$ contains at least one free critical point $c_k$ by \eqref{equ-v-c-inv}. Since $B_\infty$ has $n$-fold symmetry, we obtain that $f^{-1}(v_1)\subset B_\infty$, which implies that $B_\infty$ is completely invariant. The assertion $J(f_\lambda)=\partial B_\infty$ follows by \cite[Corollary 4.12]{Mi2}.
\end{proof}

For the connectivity of the Julia sets of rational maps, the following criterion was established in \cite{XY}.

\begin{lema}[{\cite[Lemma 2.9]{XY}}]\label{CFPCS}
Suppose that $f$ is a rational function which has no Herman rings and each Fatou component contains at most one critical value. Then the Julia set of $f$ is connected.
\end{lema}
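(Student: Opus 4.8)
The plan is to reduce the statement to the claim that \emph{every} Fatou component of $f$ is simply connected, and then to prove that claim by induction on the pre-period, using Riemann--Hurwitz's formula (Lemma~\ref{lema-Riem-Hur}) together with the two hypotheses.

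First I would recall the standard topological fact that, for a rational map of degree at least two, $J(f)$ is connected if and only if every Fatou component is simply connected (see e.g.~\cite{Mi2}). Indeed, if $J(f)$ is connected, then every Fatou component, being a connected component of the complement of a connected compact subset of $\EC$, is simply connected; conversely, if $J(f)$ were disconnected, a Jordan curve lying in the Fatou set and separating $J(f)$ into two disjoint nonempty compact pieces would be contained in a single Fatou component $U$, and the simple connectivity of $U$ would force one of those pieces into the Jordan disk bounded by the curve, contradicting the fact that it consists of Julia points. Hence it suffices to show that every Fatou component has Euler characteristic equal to $1$.

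Next I would handle the periodic Fatou components. Herman rings are excluded by hypothesis, and Siegel disks are simply connected by definition, so only cycles $U_0\to U_1\to\cdots\to U_{p-1}\to U_0$ of components of attracting, super-attracting or parabolic basins remain, with first-return map $g=f^{p}\colon U_0\to U_0$ of some degree $d$. By the classical theorem of Fatou such a cycle meets a critical point of $f$ (in the super-attracting case, the super-attracting periodic point itself), and pulling that critical point back through the surjections $f^{i}\colon U_0\to U_i$ shows that $g$ has at least one critical point in $U_0$, so $\delta_g(U_0)\geq 1$. Applying Lemma~\ref{lema-Riem-Hur} to $g\colon U_0\to U_0$ gives $(d-1)\chi(U_0)=\delta_g(U_0)\geq 1$, which forces $d\geq 2$ and $\chi(U_0)=1$. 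Thus every periodic Fatou component is simply connected.

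Finally I would run the induction. By Sullivan's theorem every Fatou component is eventually periodic, so it is enough to prove: if $V=f(U)$ is a simply connected Fatou component, then $U$ is simply connected. Since $V$ contains at most one critical value, either $V$ contains none, in which case $f\colon U\to V$ is an unbranched covering of a simply connected domain, hence a homeomorphism and $U$ is simply connected; or $V$ contains exactly one critical value $v$, and then every critical point of $f$ in $U$ lies over $v$. In the latter case write $d=\deg(f|_U)$ and $m=\#\bigl(U\cap f^{-1}(v)\bigr)\geq 1$; the preimages of $v$ in $U$ have local degrees summing to $d$, so $\delta_f(U)=d-m$, and Lemma~\ref{lema-Riem-Hur} yields $\chi(U)=d\,\chi(V)-\delta_f(U)=d-(d-m)=m$. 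Since $\chi(U)\leq 1$ always, we conclude $m=1$ and $\chi(U)=1$. This closes the induction and, combined with the topological reduction, proves the lemma. I expect the periodic case to be the main obstacle: Riemann--Hurwitz alone does not a priori rule out multiply connected periodic components, so one genuinely needs both the absence of Herman rings and the presence of critical points in attracting and parabolic basins.
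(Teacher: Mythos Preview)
The paper does not prove this lemma; it is quoted from \cite[Lemma~2.9]{XY}. Your overall plan and your inductive step are correct, but the treatment of periodic components has a genuine gap. You apply Lemma~\ref{lema-Riem-Hur} to the first-return map $g=f^{p}\colon U_0\to U_0$ and read off $(d-1)\chi(U_0)=\delta_g(U_0)\geq 1$; however, Lemma~\ref{lema-Riem-Hur} requires the target to have finitely many boundary components, and a periodic (super)attracting or parabolic basin may a priori be \emph{infinitely} connected, in which case $\chi(U_0)=-\infty$ and the identity is vacuous. Notice, too, that your periodic paragraph never uses the hypothesis that each $U_i$ contains at most one critical value; were the argument valid as written, it would show that every periodic (super)attracting basin is simply connected---yet the basin of infinity of a cubic polynomial with exactly one escaping finite critical point is infinitely connected (and contains two critical values of $f$).

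The remedy is to run your inductive-step count \emph{inside} the cycle rather than applying Riemann--Hurwitz to $U_0$ globally. Choose a small Jordan disk $D\subset U_0$ about the (super)attracting periodic point (or inside an attracting petal in the parabolic case) with $f^{p}(\overline{D})\subset D$, and pull $D$ back one application of $f$ at a time around the cycle. At each step the current simply connected domain lies in some $U_i$ and therefore contains at most one critical value of $f$, so by your own preimage computation the relevant component of its $f$-preimage is again simply connected. These nested simply connected domains exhaust $U_0$, and an increasing union of simply connected plane domains is simply connected. With this modification your proof goes through.
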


We remark that Peherstorfer and Stroh proved a similar result as Lemma \ref{CFPCS} in \cite[Theorem 4.2]{PS}, where they required that each Fatou component contains at most one critical \emph{point} (counted without multiplicity).

\section{Both free critical values are escaped}\label{Escape}

In this section, we consider the case where the free critical value $v_0$ is attracted by $\infty$. However, we will prove that $v_1$ lies in the super-attracting basin of $\infty$ if $v_0$ does. According to Sullivan's classification theorem, the Fatou set $F(f_\lambda)$ of $f_\lambda$ is equal to $\MA(\infty)$ and the Julia set is $J(f_\lambda)=\EC\setminus\MA(\infty)$. For $a\in\C$ and $r>0$, we use $\D(a,r)=\{z\in\C:|z-a|<r\}$ to denote the open disk centered at $a$ with radius $r$.

\begin{lema}\label{lema-domain-a-1}
For any $0<\kappa<1$, let $\lambda$ be the parameter satisfying
\begin{equation}\label{equ-a-domain-1}
0<|\lambda|< \frac{1}{1+\kappa}\left(\frac{2\kappa}{(1+\kappa)(\sqrt{\kappa^2+4\kappa}+\kappa)}\right)^{\frac{1}{n-1}}.
\end{equation}
Denote $D_\lambda:=\D(-\lambda ,\kappa|\lambda|)$. Then we have

$(1)$ $f_\lambda$ maps the closed disk $\overline{D}_\lambda$ into its interior and $\overline{D}_\lambda$ is contained in a geometrically attracting basin $B_0$ of $f_\lambda$;

$(2)$ The preimage $f_\lambda^{-1}(D_\lambda)$ is a Jordan domain containing $0$ on which the degree of the restriction of $f_\lambda$ is $2n$. In particular, the basin $B_0$ is completely invariant.
\end{lema}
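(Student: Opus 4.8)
The plan is to prove (1) by a direct contraction estimate on $\overline D_\lambda$ and (2) by combining Lemma \ref{connectivity} with the Riemann--Hurwitz bookkeeping of Lemma \ref{lema-Riem-Hur}, upgrading ``simply connected'' to ``Jordan domain'' by a short topological argument.

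For (1), I would begin with the algebraic identity
\[
f_\lambda(z)+\lambda=z^{n}+\lambda+\frac{\lambda^{2}}{z^{n}-\lambda}=\frac{z^{2n}}{z^{n}-\lambda},
\]
valid wherever $z^{n}\ne\lambda$. Put $s:=(1+\kappa)^{n}|\lambda|^{n-1}$; rationalising the right-hand side of \eqref{equ-a-domain-1} shows that the hypothesis is exactly the inequality $s<\tfrac12\bigl(\sqrt{\kappa^{2}+4\kappa}-\kappa\bigr)$, equivalently $s^{2}+\kappa s-\kappa<0$; in particular $s<1$ and $s<2$. For $z\in\overline D_\lambda$ one has $|z|\le(1+\kappa)|\lambda|$, hence $|z^{n}|\le((1+\kappa)|\lambda|)^{n}<|\lambda|$ (so $\overline D_\lambda$ contains no pole of $f_\lambda$) and $|z^{n}-\lambda|\ge|\lambda|-|z|^{n}\ge|\lambda|(1-s)$. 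The identity then gives $|f_\lambda(z)+\lambda|\le s^{2}|\lambda|/(1-s)<\kappa|\lambda|$, i.e. $f_\lambda(\overline D_\lambda)\subset D_\lambda$. A standard normal-family argument (or the fact that $f_\lambda$ strictly contracts the hyperbolic metric of $D_\lambda$) now produces a fixed point $z_{0}\in D_\lambda$ with $f_\lambda^{\,k}\to z_{0}$ locally uniformly on $D_\lambda$. Since $\kappa<1$ gives $|0-(-\lambda)|=|\lambda|>\kappa|\lambda|$, and $s<2$ gives $|c_{k}|=|2\lambda|^{1/n}>(1+\kappa)|\lambda|$, none of the free critical points $0,c_{1},\dots,c_{n}$ (and of course not $\infty$) lies in $\overline D_\lambda$; thus $f_\lambda'(z_{0})\ne0$, so $z_{0}$ is geometrically attracting and $\overline D_\lambda$ lies in its immediate basin $B_{0}$.

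For (2), note first that the round disk $D_\lambda$ contains exactly one critical value of $f_\lambda$, its centre $v_{0}=-\lambda$, because $|v_{1}-v_{0}|=4|\lambda|>\kappa|\lambda|$ and $\infty\notin D_\lambda$. By Lemma \ref{connectivity}, $W:=f_\lambda^{-1}(D_\lambda)$ is a simply connected domain containing $0$ on which $\deg f_\lambda=2n$; as $f_\lambda^{-1}(v_{0})=\{0\}$, the local degree of $f_\lambda$ at $0$ is already $2n$, so $W$ contains no other critical point and $\partial D_\lambda$ carries no critical value. Hence $f_\lambda$ is a local homeomorphism at every point of $\partial W=f_\lambda^{-1}(\partial D_\lambda)$, which makes $\partial W$ a finite disjoint union $\Gamma_{1}\sqcup\dots\sqcup\Gamma_{p}$ of (real-analytic) Jordan curves; since $W$ is connected and these are all of $\partial W$, one checks that $\EC\setminus W$ is the disjoint union of the $p$ closed Jordan disks bounded by the $\Gamma_{j}$ away from $W$, so $\chi(W)=2-p$. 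On the other hand Lemma \ref{lema-Riem-Hur}, applied with $\delta_{f_\lambda}(W)=2n-1$, gives $\chi(W)=2n\cdot\chi(D_\lambda)-\delta_{f_\lambda}(W)=1$; therefore $p=1$ and $W$ is a Jordan domain. Finally, $v_{0}\in D_\lambda\subset B_{0}$ together with $f_\lambda(B_{0})=B_{0}$ forces $W\cap B_{0}\ne\emptyset$, so the connected set $W$ lies in $B_{0}$ and $0\in B_{0}$; since every component of $f_\lambda^{-1}(B_{0})$ must contain the unique preimage $0$ of $v_{0}$, the set $f_\lambda^{-1}(B_{0})$ is connected and equal to the Fatou component of $0$, i.e. $B_{0}$ is completely invariant.

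The step I expect to be the genuine obstacle is the estimate in (1): the majorants for $|z|$ and for $|z^{n}-\lambda|$ must be chosen so that the resulting sufficient condition collapses to the quadratic $s^{2}+\kappa s-\kappa<0$ whose positive root reproduces \eqref{equ-a-domain-1} after rationalisation, whereas any cruder bound yields a non-sharp constant of a different shape. In contrast, the passage from ``simply connected'' to ``Jordan domain'' in (2) is routine once one knows $\partial D_\lambda$ carries no critical value, and the complete-invariance claim is a soft Fatou-component argument.
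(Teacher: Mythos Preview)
Your proof is correct and follows essentially the same route as the paper: in (1) you use the identity $f_\lambda(z)+\lambda=z^{2n}/(z^{n}-\lambda)$ and the bound $s=(1+\kappa)^n|\lambda|^{n-1}<\tfrac12(\sqrt{\kappa^2+4\kappa}-\kappa)$ exactly as the paper does (the paper writes the same computation without naming $s$, and then invokes Schwarz's lemma where you invoke hyperbolic contraction). In (2) you follow the paper in citing Lemma~\ref{connectivity}; the only difference is that you supply an explicit Euler-characteristic argument to pass from ``simply connected'' to ``Jordan domain'', whereas the paper leaves this step implicit.
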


\begin{proof}
(1) If $\lambda$ satisfies \eqref{equ-a-domain-1}, then we have
\begin{equation*}
(1+\kappa)^n|\lambda|^{n-1}< \frac{2\kappa}{\sqrt{\kappa^2+4\kappa}+\kappa}<1,
\end{equation*}
which means that $(1+\kappa)^n|\lambda|^n<|\lambda|$. If $z\in \overline{D}_\lambda=\overline{\D}(-\lambda ,\kappa|\lambda|)$, then we have
\begin{equation*}
\begin{split}
|f_\lambda(z)-(-\lambda )|&~=\left|\frac{z^{2n}}{z^n-\lambda }\right|\leq\frac{(1+\kappa)^{2n}|\lambda|^{2n}}{|\lambda|-(1+\kappa)^n|\lambda|^n}
=|\lambda|\cdot\frac{(1+\kappa)^{2n}|\lambda|^{2n-2}}{1-(1+\kappa)^n|\lambda|^{n-1}}\\
&~< |\lambda|\cdot\left(\frac{2\kappa}{\sqrt{\kappa^2+4\kappa}+\kappa}\right)^2\cdot\left(1-\frac{2\kappa}{\sqrt{\kappa^2+4\kappa}+\kappa}\right)^{-1}=\kappa|\lambda|.
\end{split}
\end{equation*}
This means that $f_\lambda$ maps the closed disk $\overline{D}_\lambda$ into its interior. Therefore, $\overline{D}_\lambda$ is contained in a fixed Fatou component $B_0$ of $f_\lambda$ and the orbit of $v_0=-\lambda $ is contained in $D_\lambda$.

Since the critical point $0$ and the critical value $v_1=3\lambda$ are both disjoint with $\overline{D}_\lambda$, it means that $\overline{D}_\lambda$ does not contain any critical points. By Schwarz's Lemma, $B_0$ is a geometrically attracting basin which contains an attracting fixed point in $D_\lambda$ with multiplier $\rho$ satisfying $0<|\rho|<1$.

(2) Since $D_\lambda$ is a Jordan domain containing exactly one free critical value $v_0$. The first assertion holds by Lemma \ref{connectivity}. Moreover, the attracting basin containing $D_\lambda$ is completely invariant.
\end{proof}

\begin{lema}\label{lema-domain-a-2}
For $\kappa=1/5$, let $\lambda$ be the parameter satisfying
\begin{equation}\label{equ-a-domain-2}
|\lambda|\geq \frac{1}{1+\kappa}\left(\frac{2\kappa}{(1+\kappa)(\sqrt{\kappa^2+4\kappa}+\kappa)}\right)^{\frac{1}{n-1}}.
\end{equation}
Then $f_\lambda$ maps the closed disk $\EC\setminus\D(0,3|\lambda|)$ into its interior. In particular, the orbit of the free critical value $v_1=3\lambda$ is contained in the immediate attracting basin of $\infty$.
\end{lema}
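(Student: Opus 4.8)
The plan is to reduce the whole statement to one elementary estimate: under \eqref{equ-a-domain-2} with $\kappa=1/5$, one has $|f_\lambda(z)|>3|\lambda|$ for every $z$ with $|z|\ge 3|\lambda|$. First I would decode the hypothesis for this particular $\kappa$. A short computation gives
\[
\frac{2\kappa}{(1+\kappa)(\sqrt{\kappa^2+4\kappa}+\kappa)}=\frac{\sqrt{21}-1}{12}
\qquad\text{and}\qquad
\frac{1}{1+\kappa}=\frac56 ,
\]
so \eqref{equ-a-domain-2} is equivalent to $\bigl(\tfrac65|\lambda|\bigr)^{n-1}\ge\tfrac{\sqrt{21}-1}{12}$. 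Multiplying through by $3^{n}=3\cdot 3^{n-1}$ and using $n\ge 3$ yields the clean consequence
\[
u:=3^{n}|\lambda|^{n-1}\ \ge\ \tfrac14\bigl(\tfrac52\bigr)^{n-1}(\sqrt{21}-1)\ \ge\ \tfrac{25}{16}(\sqrt{21}-1)\ >\ 2+\sqrt 2 ,
\]
where the last inequality holds because $\sqrt{21}>4.5$ and $\sqrt 2<1.5$. In particular $u>1$, hence $|z|^{n}>|\lambda|$ whenever $|z|\ge 3|\lambda|$, so $f_\lambda$ has no pole in $\EC\setminus\D(0,3|\lambda|)$.

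Next comes the estimate itself. For $z$ with $|z|\ge 3|\lambda|$ set $t:=|z|^{n}\ge 3^{n}|\lambda|^{n}$. The reverse triangle inequality gives $|f_\lambda(z)|\ge|z|^{n}-\tfrac{|\lambda|^{2}}{|z|^{n}-|\lambda|}=\varphi(t)$, where $\varphi(t):=t-\tfrac{|\lambda|^{2}}{t-|\lambda|}$. Since $\varphi$ is increasing on $(|\lambda|,\infty)$, we get $|f_\lambda(z)|\ge\varphi(3^{n}|\lambda|^{n})=|\lambda|\bigl(u-\tfrac1{u-1}\bigr)$, so it remains to check $u-\tfrac1{u-1}>3$; for $u>1$ this is equivalent to $u^{2}-4u+2>0$, i.e. to $u>2+\sqrt 2$, which is exactly what the first step supplied. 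Therefore $|f_\lambda(z)|>3|\lambda|$, and since $f_\lambda(\infty)=\infty$ it follows that $f_\lambda$ maps $S:=\EC\setminus\D(0,3|\lambda|)$ into its interior $\{|z|>3|\lambda|\}\cup\{\infty\}$.

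Finally, for the ``in particular'' statement I would argue that $V:=\Int(S)$ is a topological disk (hence hyperbolic) containing the superattracting fixed point $\infty$, that $V$ is forward invariant, and that $f_\lambda(\overline V)=f_\lambda(S)$ is a compact subset of $V$; a standard normal-families / hyperbolic-contraction argument (as in the proof of Lemma \ref{lema-domain-a-1}(1)) then shows $f_\lambda^{k}\to\infty$ uniformly on $S$. Thus every point of $S$ is attracted to $\infty$, so $S\subset\MA(\infty)$, and since $S$ is connected and contains $\infty$ we obtain $S\subset B_\infty$. As $v_1=3\lambda\in\partial\D(0,3|\lambda|)\subset S$, the whole forward orbit of $v_1$ lies in $B_\infty$.

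I do not expect a genuine difficulty: the statement is a one-line estimate. The only points needing care are the bookkeeping that ties the opaque threshold in \eqref{equ-a-domain-2} to the inequality $3^{n}|\lambda|^{n-1}>2+\sqrt 2$ — and here $\kappa=1/5$ is chosen precisely so that \eqref{equ-a-domain-1} with this same $\kappa$ and \eqref{equ-a-domain-2} between them exhaust all $\lambda\in\C^{*}$, so the present lemma together with Lemma \ref{lema-domain-a-1} covers every parameter — and the observation that $\varphi$ is monotone, so the extremal case is $|z|=3|\lambda|$. The hypothesis $n\ge 3$ is used only through $(5/2)^{n-1}\ge 25/4$, which is where the margin closes up when $n=2$, in line with the remark in the introduction.
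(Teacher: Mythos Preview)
Your proof is correct and follows essentially the same route as the paper's: both reduce the claim to the elementary estimate $|f_\lambda(z)|>3|\lambda|$ for $|z|\ge 3|\lambda|$, both extract from the hypothesis the key numerical fact $3^{n}|\lambda|^{n-1}>2+\sqrt 2$ (the paper writes this as $A^{n-1}>(2+\sqrt 2)/3$ with $A=3|\lambda|$), and both conclude by bounding $|f_\lambda(z)|$ from below via the reverse triangle inequality at the worst case $|z|=3|\lambda|$. Your packaging via the monotone auxiliary function $\varphi$ and your explicit justification of the ``in particular'' clause are slightly more detailed than the paper's, but the argument is the same.
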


\begin{proof}
For simplicity, we denote $A:=3|\lambda|$. If $\lambda$ satisfies \eqref{equ-a-domain-2}, since $\kappa=1/5$ and $n\geq 3$, we have
\begin{equation*}
\begin{split}
A^{n-1}&~\geq \left(\frac{3}{1+\kappa}\right)^{n-1}\frac{2\kappa}{(1+\kappa)(\sqrt{\kappa^2+4\kappa}+\kappa)}=\left(\frac{5}{2}\right)^{n-1}\cdot\frac{\sqrt{21}-1}{12}\\
&~> \frac{25}{4}\cdot\frac{1}{4}>\frac{2+\sqrt{2}}{3}>1.
\end{split}
\end{equation*}
Therefore, $A^n>A>|\lambda|$ and $9A^{2n-2}-3A^{n-1}-1>9A^{n-1}-3$.  If $|z|\geq A$, then
\begin{equation*}
|f_\lambda(z)|=\left|z^n+\frac{\lambda^2}{z^n-\lambda }\right|\geq A^n-\frac{A^2/9}{A^n-A/3}=A\cdot\frac{9A^{2n-2}-3A^{n-1}-1}{9A^{n-1}-3}>A.
\end{equation*}
Thus $f_\lambda$ maps the closed disk $\EC\setminus\D(0,3|\lambda|)$ into its interior. The proof is complete.
\end{proof}

Recall that $B_\infty$ is the immediate super-attracting basin of $\infty$.

\begin{cor}\label{cor-v-0-v-c}
If $v_0\in\MA(\infty)$, then $v_1\in B_\infty\subset \MA(\infty)$.
\end{cor}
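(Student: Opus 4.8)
The plan is to split into the two parameter regimes that Lemmas \ref{lema-domain-a-1} and \ref{lema-domain-a-2} were designed to cover, and show that in each regime the hypothesis $v_0\in\MA(\infty)$ forces $v_1$ into $B_\infty$. First, suppose $\lambda$ satisfies \eqref{equ-a-domain-1} for some $0<\kappa<1$ (for instance $\kappa=1/5$). Then Lemma \ref{lema-domain-a-1} says the orbit of $v_0$ is trapped in $D_\lambda=\D(-\lambda,\kappa|\lambda|)$, so $v_0$ lies in a genuine geometrically attracting basin $B_0$ that is \emph{bounded} (it is completely invariant, but $\overline{D}_\lambda$ is a compact subset of $\C$ far from $\infty$). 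In particular $v_0\notin\MA(\infty)$, so this regime is vacuous under the hypothesis of the corollary and there is nothing to prove. Hence we are left with the case that $\lambda$ satisfies \eqref{equ-a-domain-2} with $\kappa=1/5$, i.e. $|\lambda|$ is bounded below by the quantity in \eqref{equ-a-domain-1}. But this is exactly the hypothesis of Lemma \ref{lema-domain-a-2}, which directly gives that $f_\lambda$ maps $\EC\setminus\D(0,3|\lambda|)$ into its interior, so the orbit of $v_1=3\lambda$ stays in $\EC\setminus\D(0,3|\lambda|)$ and converges to $\infty$; that is, $v_1\in B_\infty$.

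The only point that needs a word of care is the \emph{dichotomy} itself: the two lemmas together cover all $\lambda\in\C^*$ only if the threshold in \eqref{equ-a-domain-1} (with the free choice of $\kappa$) and the threshold in \eqref{equ-a-domain-2} (with $\kappa$ fixed to $1/5$) are the \emph{same} number. They are — both thresholds are literally
\[
\frac{1}{1+\kappa}\left(\frac{2\kappa}{(1+\kappa)(\sqrt{\kappa^2+4\kappa}+\kappa)}\right)^{\frac{1}{n-1}}
\]
evaluated at $\kappa=1/5$. So fixing $\kappa=1/5$ throughout, every $\lambda\in\C^*$ falls into exactly one of the two lemmas' ranges, and the argument above applies. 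I would phrase the proof as: fix $\kappa=1/5$; if $|\lambda|$ is below the threshold, Lemma \ref{lema-domain-a-1} puts $v_0$ in a bounded attracting basin, contradicting $v_0\in\MA(\infty)$, so this case does not occur; otherwise Lemma \ref{lema-domain-a-2} gives $v_1\in B_\infty$.

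The main (and essentially only) obstacle is the bookkeeping around "$v_0$ in a bounded basin implies $v_0\notin\MA(\infty)$": one must be sure that the geometrically attracting basin $B_0$ produced in Lemma \ref{lema-domain-a-1} really is disjoint from $\MA(\infty)$. This is immediate because $\overline{D}_\lambda$ is forward-invariant and compact in $\C$, while $\MA(\infty)$ is the basin of the \emph{super-attracting} fixed point $\infty$; a single Fatou component cannot be simultaneously the immediate basin of a finite attracting fixed point (with multiplier $\rho\neq 0$, as Lemma \ref{lema-domain-a-1} records) and of $\infty$. Everything else is a direct quotation of the two preceding lemmas, so no new estimates are required.
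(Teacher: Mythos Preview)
Your proof is correct and is essentially the same argument as the paper's: both fix $\kappa=1/5$, use the contrapositive of Lemma~\ref{lema-domain-a-1} to rule out the small-$|\lambda|$ regime under the hypothesis $v_0\in\MA(\infty)$, and then invoke Lemma~\ref{lema-domain-a-2} directly to place $v_1$ in $B_\infty$. The paper states this in two lines without the explicit case split, but the logical content is identical.
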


\begin{proof}
If $v_0=-\lambda$ is attracted by the super-attracting fixed point located at $\infty$, then $\lambda$ should satisfy \eqref{equ-a-domain-2} for any $0<\kappa<1$ by Lemma \ref{lema-domain-a-1}. Let us set $\kappa=1/5$. By Lemma \ref{lema-domain-a-2}, we know that $v_1=3\lambda\in B_\infty\subset\MA(\infty)$.
\end{proof}

In order to prove Theorem \ref{main-thm}, we need the following lemma.

\begin{lema}[{\cite[Theorem 9.8.1]{Bea}}]\label{Beardon}
Let $f$ be a rational map with degree at least two. If all of the critical points of $f$ lie in the immediate attracting basin of a (super)attracting fixed point of $f$, then the Julia set of $f$ is a Cantor set.
\end{lema}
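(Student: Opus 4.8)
The plan is to recover the classical ``disconnected polynomial‑like'' picture and then exhibit $J(f)$ as the attractor of a finite system of strict contractions. Write $d=\deg f\ge2$, let $p$ be the given (super)attracting fixed point and $B$ its immediate basin; by hypothesis $B$ carries all $2d-2$ critical points of $f$, so every critical point is attracted to $p$ and $f$ is hyperbolic: it has no parabolic basins, Siegel disks or Herman rings, and $B$ is its only periodic Fatou component. Using a Koenigs coordinate at $p$ (a B\"ottcher coordinate if $p$ is superattracting), choose a Jordan domain $W_0\ni p$ with $f(\overline{W_0})\Subset W_0$, and let $W_{m+1}$ be the component of $f^{-1}(W_m)$ containing $p$; it is standard that $W_0\Subset W_1\Subset\cdots$ and $\bigcup_m W_m=B$. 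Since $f$ has finitely many critical points and all lie in $B$, there is an $N$ with $\Crit(f)\subset W:=W_N$. Put $U:=\EC\setminus\overline W$, a nonempty open set with finitely many components. From $f(\overline{W_{N+1}})\subset\overline W\Subset W_{N+1}$ one gets $\overline W\Subset f^{-1}(\overline W)$, hence $f^{-1}(U)\Subset U$; and since $W\cap f^{-1}(U)=\varnothing$ while $\Crit(f)\subset W$, the restriction $f\colon f^{-1}(U)\to U$ is an \emph{unbranched} proper holomorphic covering of degree $d$.

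Next I would identify and describe $J(f)$. As $\overline W\subset W_{N+1}\subset F(f)$ and $J(f)$ is forward invariant, $J(f)\subset\bigcap_m f^{-m}(U)$; conversely, a point whose whole orbit avoids the neighbourhood $W$ of $p$ cannot be a Fatou point, because hyperbolicity forces every Fatou point eventually into $B$ and then to converge to $p$. Hence
\[
J(f)=\bigcap_{m\ge0}f^{-m}\bigl(\overline U\bigr),
\]
where each $f^{-m}(\overline U)$ is a disjoint union of finitely many compact ``level‑$m$ pieces'', each carried onto a component of $\overline U$ by $f^{m}$ without any critical point intervening, and refined by the level‑$(m+1)$ pieces. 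The quantitative heart is that the inverse branches of $f$ on $U$ are uniform contractions for the hyperbolic metric $\rho_U$ of $U$: the covering $f\colon f^{-1}(U)\to U$ is a local isometry for the respective hyperbolic metrics, while on the compact set $\overline{f^{-1}(U)}$ one has $\rho_U\le(1-\eta)\,\rho_{f^{-1}(U)}$ for some $\eta>0$, by the Schwarz--Pick lemma together with $f^{-1}(U)\Subset U$; so each inverse branch of $f$ contracts $\rho_U$ by the factor $1-\eta$ on the relevant compact set. Consequently the $\rho_U$‑diameters, hence the Euclidean diameters, of the level‑$m$ pieces decay geometrically in $m$. Therefore $J(f)$ is nonempty, compact, perfect (being the Julia set of a map of degree $\ge2$) and totally disconnected, since two distinct points of $J(f)$ eventually lie in different and therefore disjoint level‑$m$ pieces, once the diameter of their common piece drops below their distance. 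A nonempty compact perfect totally disconnected metric space is homeomorphic to the Cantor set.

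I expect two points to require genuine care. First, the pulled‑back domain $W=W_N$ need not be simply connected — Riemann--Hurwitz (Lemma \ref{lema-Riem-Hur}) only controls $\chi(W_{m+1})$ — so $U$ may be disconnected and its components need not be disks; the ``pieces'' must then be organised combinatorially rather than through a single Jordan curve. Nothing in the argument uses connectivity of $U$ or the precise degrees of $f^m$ on the pieces, only that $U$ has finitely many components, that $f^{-1}(U)\Subset U$, and that no critical point lies in any $f^{-k}(U)$. Second, and more substantively, the uniform contraction estimate for the inverse branches — equivalently the geometric decay of the diameters of the pieces — is the real technical content; it rests on the strict inclusion $f^{-1}(U)\Subset U$ and the Schwarz--Pick lemma, and it is precisely what forces total disconnectedness. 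Everything else is standard bookkeeping with the Fatou‑component classification and the covering structure.
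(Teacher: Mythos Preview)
The paper does not supply a proof of this lemma: it is quoted as Theorem~9.8.1 of Beardon's textbook and invoked as a black box in the proof of Theorem~\ref{main-thm}(1), so there is no argument in the paper to compare yours against. That said, your outline is essentially the classical argument (and close to Beardon's own): exhaust the immediate basin by pullbacks $W_m$ of a linearising disk until all critical points are captured, pass to the complement $U=\EC\setminus\overline{W_N}$, and use Schwarz--Pick contraction of the inverse branches on the hyperbolic set $U$ to force the nested level pieces to shrink to points. The reasoning is sound, and your two self-identified caveats are exactly the right ones. Two small points worth tightening: first, the uniform bound $\rho_U\le(1-\eta)\rho_{f^{-1}(U)}$ on $\overline{f^{-1}(U)}$ is justified because the ratio $\rho_U/\rho_{f^{-1}(U)}$ extends continuously by~$0$ to $\partial f^{-1}(U)\subset U$, so compactness of $\overline{f^{-1}(U)}$ in $U$ gives a maximum strictly below~$1$; second, to make the diameter decay literal you should take the level-$m$ pieces to be the components of $f^{-m}(K)$ for a fixed compact $K\subset U$ containing $J(f)$ (e.g.\ $K=\overline{f^{-1}(U)}$), so that their $\rho_U$-diameters are finite from the outset.
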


\begin{proof}[{Proof of Theorem \ref{main-thm} (1)}]
If $v_0$ is attracted by the super-attracting fixed point located at $\infty$, then the critical value $v_1$ lies in the immediate super-attracting basin $B_\infty$ by Corollary \ref{cor-v-0-v-c}. According to Proposition \ref{inBandT}, $B_\infty$ is completely invariant and contains all critical points. This means that the Julia set of $f_\lambda$ is a Cantor set by Lemma \ref{Beardon}.
\end{proof}

\begin{exam}
For each $n\geq 3$, let $\lambda=-\sqrt[n-1]{-1}$. Then it is easy to check that $v_0$ is a pole of $f_\lambda$, i.e. $v_0\in\MA(\infty)$. Therefore, by Theorem \ref{main-thm} (1), $J(f_\lambda)$ is a Cantor set.
\end{exam}

\section{Only one free critical value is escaped}\label{SEscape}

In this section, we consider the case where the attracting basin of $\infty$ attracts exactly one free critical orbit of $f_\lambda$. By Corollary \ref{cor-v-0-v-c}, there is only one possibility: $v_1\in \MA(\infty)$ and $v_0\notin \MA(\infty)$. In order to prove Theorem \ref{main-thm} (2), we need the polynomial-like mapping theory introduced by Douady and Hubbard in \cite{DH}.

\begin{defi}
A triple $(U, V, f)$ is called a \emph{polynomial-like mapping} of degree $d\geq 2$ if $U$ and $V$ are simply connected plane domains such that $\overline{U}\subset V$, and $f: U\rightarrow V$ is a holomorphic proper mapping of degree $d$. The \emph{filled Julia set} $K(f)$ of a polynomial-like mapping $f$ is defined as
$$
K(f)=\{z\in U: f^{\circ k}(z)\in U \text{~for all~}  k\geq 0\}.
$$
The \emph{Julia set} of the polynomial-like mapping $f$ is defined as $J(f)=\partial K(f)$.
\end{defi}

Two polynomial-like mappings $(U_{1}, V_{1}, f_1)$ and $(U_{2},V_{2}, f_{2})$ of degree $d\geq 2$ are said to be \emph{hybrid equivalent} if there exists a quasi-conformal homeomorphism $h$ defined from a neighborhood of $K(f_1)$ onto that of $K(f_2)$, which conjugates $f_1$ to $f_2$ and whose complex dilatation vanishes on
$K(f_1)$. The following theorem was proved by Douady and Hubbard in \cite[Theorem 1, p.\,296]{DH}.

\begin{thm}[{The Straightening Theorem}]\label{straightening}
Every polynomial-like mapping $(U, V, f)$ of degree $d\geq 2$ is hybrid equivalent to a polynomial with the same degree.
\end{thm}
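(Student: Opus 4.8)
The plan is to prove the theorem by quasiconformal surgery together with the measurable Riemann mapping theorem (the statement genuinely cannot be established without this analytic input). The idea is to discard the dynamically uncontrolled exterior of the polynomial-like map, replace it by the standard model $w\mapsto w^{d}$, and then recover an honest holomorphic map by integrating an invariant conformal structure. After replacing $V$ by a slightly smaller domain with real-analytic boundary and $U$ by its $f$-preimage (which leaves $K(f)$ unchanged), I may assume $\partial U$ and $\partial V$ are analytic Jordan curves, so that $f\colon\partial U\to\partial V$ is a degree-$d$ covering of circles. Write $A:=V\setminus\overline{U}$ for the fundamental annulus.

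First I would construct a quasiregular branched cover $F\colon\EC\to\EC$ of degree $d$ as follows. On $\overline{U}$ set $F=f$. Choose a conformal isomorphism $\psi$ from the exterior $\EC\setminus V$ onto $\{\,w:|w|>r\,\}\cup\{\infty\}$ with $\psi(\infty)=\infty$, where $r>1$, and on this exterior set $F=\psi^{-1}\circ(w\mapsto w^{d})\circ\psi$; since $r>1$, the model pushes the gluing circle $\{|w|=r\}=\psi(\partial V)$ strictly outward to $\{|w|=r^{d}\}$, so $\EC\setminus V$ is mapped into itself with $\infty$ a superattracting fixed point of local degree $d$. It remains to define $F$ on $A$: the inner boundary map $f\colon\partial U\to\partial V$ and the outer boundary map furnished by the model (carrying $\partial V$ onto $\psi^{-1}(\{|w|=r^{d}\})$) are both degree-$d$ coverings of circles, hence homotopic, so they extend to a smooth degree-$d$ covering of $A$ onto the collar bounded by these two circles; being smooth on a compact annulus this covering is quasiconformal with some dilatation $k<1$ and has no critical points. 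The resulting $F$ is a quasiregular map of global degree $d$, holomorphic off $A$, equal to $f$ on $U$ and equal to $w^{d}$ near $\infty$.

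Next I would produce an $F$-invariant measurable conformal structure of bounded dilatation. The only place where $F$ fails to be holomorphic is the compact annulus $A$, and the essential geometric observation is that \emph{every orbit of $F$ meets $A$ at most once}: a point of $K(f)$ stays in $U$ forever and never enters $A$; a point that eventually leaves $U$ lands in $A$ exactly once and is then carried by $F$ into $F(A)$, which lies in the exterior $\EC\setminus V$, a region on which $F$ is the holomorphic model and which is forward invariant with all orbits escaping to $\infty$. Hence, starting from the standard structure $\sigma_{0}$ on the exterior and pulling it back along the iterates of $F$ yields a well-defined invariant structure $\sigma$ whose Beltrami coefficient satisfies $\|\mu_{\sigma}\|_{\infty}=k<1$, the bound $k$ of the interpolation on $A$ never being composed with itself; moreover $\mu_{\sigma}=0$ on $K(f)$ because those orbits avoid $A$ entirely.

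Finally, the measurable Riemann mapping theorem furnishes a quasiconformal homeomorphism $\phi\colon\EC\to\EC$ with complex dilatation $\mu_{\sigma}$, normalized so that $\phi(\infty)=\infty$. Invariance of $\sigma$ makes $P:=\phi\circ F\circ\phi^{-1}$ holomorphic, hence rational of degree $d$; as $F=w^{d}$ near $\infty$, the point $\infty$ is a fixed point of $P$ whose local degree equals the global degree $d$, so $P^{-1}(\infty)=\{\infty\}$ and $P$ is a polynomial of degree $d$. The restriction of $\phi$ to a neighborhood of $K(f)$ then conjugates $f$ to $P$ and, because $\mu_{\sigma}$ vanishes on $K(f)$, is a hybrid equivalence, as required. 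The main obstacle is the surgery of the second paragraph: one must carry out the interpolation on $A$ compatibly with the chosen uniformization of the exterior and verify the orbit-geometry claim that underlies the uniform bound $\|\mu_{\sigma}\|_{\infty}<1$; the integration step is then a direct appeal to the measurable Riemann mapping theorem.
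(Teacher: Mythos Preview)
Your outline is a correct sketch of the standard Douady--Hubbard proof via quasiconformal surgery and the measurable Riemann mapping theorem. However, you should be aware that the paper does \emph{not} prove this theorem at all: it is stated without proof and attributed to Douady and Hubbard \cite[Theorem~1, p.\,296]{DH}. The Straightening Theorem is quoted here purely as a black box, to be applied in the proofs of Theorem~\ref{main-thm}(2), Theorem~\ref{main-thm}(3b), and Theorem~\ref{thm-no-type}. So there is no ``paper's own proof'' to compare against; your write-up is essentially the original argument from \cite{DH}, and it is sound as a proof sketch (the interpolation on $A$, the once-through-$A$ orbit claim, and the vanishing of the Beltrami coefficient on $K(f)$ are all handled correctly).
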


By applying Theorem \ref{straightening} and Fatou's theorem \cite[Theorem 4.1, p.\,66]{CG}, the following result was proved in \cite{XY}.

\begin{lema}[{\cite[Corollary 4.2]{XY}}]\label{cor-straight-thm}
Suppose $(U,V,f)$ is a polynomial-like mapping of degree $d\geq 2$. Then the Julia set of $f$ is connected if and only if all critical points of $f$ are contained in the filled Julia set of $f$.
\end{lema}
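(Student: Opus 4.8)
The plan is to reduce the statement to the classical fact about polynomials via the Straightening Theorem (Theorem~\ref{straightening}), doing the bookkeeping with a single count of critical points. First I would record two elementary counts. Since $f\colon U\to V$ is a proper holomorphic map of degree $d$ between the simply connected plane domains $U$ and $V$, the Riemann--Hurwitz formula (the proper-map version of Lemma~\ref{lema-Riem-Hur}, with $\chi(U)=\chi(V)=1$) gives $\delta_f(U)=d-1$; so $f$ has exactly $d-1$ critical points counted with multiplicity, and all of them lie in $U$. On the polynomial side, a degree-$d$ polynomial $P$ has exactly $d-1$ finite critical points counted with multiplicity.

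Next I would apply Theorem~\ref{straightening}: $f$ is hybrid equivalent to some degree-$d$ polynomial $P$, so there is a quasiconformal homeomorphism $h$ from a neighborhood $N$ of $K(f)$ onto a neighborhood $N'$ of $K(P)$ with $h\circ f=P\circ h$ where defined and complex dilatation vanishing on $K(f)$. From the conjugacy one gets $h(K(f))=K(P)$; as $h$ is a homeomorphism of $N$ onto $N'$ carrying the compact full set $K(f)$ onto the compact full set $K(P)$, it carries $J(f)=\partial K(f)$ homeomorphically onto $J(P)=\partial K(P)$. Since local degree is multiplicative under composition and equals $1$ for the homeomorphism $h$, the identity $h\circ f=P\circ h$ forces $f$ at $z$ and $P$ at $h(z)$ to have equal local degrees for every $z\in N$; hence $h$ restricts to a multiplicity-preserving bijection between the critical points of $f$ in $K(f)$ and the finite critical points of $P$ in $K(P)$. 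Let $N_{0}$ be their common number, counted with multiplicity.

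Finally I would chain three equivalences. Since $f$ has exactly $d-1$ critical points in $U\supset K(f)$ with multiplicity, \emph{all} critical points of $f$ lie in $K(f)$ if and only if $N_{0}=d-1$; likewise all $d-1$ finite critical points of $P$ lie in $K(P)$ if and only if $N_{0}=d-1$. By Fatou's theorem \cite[Theorem~4.1, p.\,66]{CG}, the latter holds if and only if $K(P)$ is connected, which, $K(P)$ being full, holds if and only if $J(P)=\partial K(P)$ is connected; and $J(f)$ is connected if and only if $J(P)$ is, since $h$ restricts to a homeomorphism $J(f)\to J(P)$. Concatenating, $J(f)$ connected $\iff N_{0}=d-1\iff$ all critical points of $f$ lie in $K(f)$, as required. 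The one point requiring care is the middle step: one must be sure the hybrid conjugacy genuinely identifies $K(f)$ with $K(P)$ and yields a faithful, multiplicity-preserving correspondence of critical points, with no spurious critical points appearing on $J(f)$. This is safe because $h$ is a homeomorphism on a full \emph{neighborhood} of $K(f)$ — not just on $K(f)$ — and critical points are isolated; everything else is the Riemann--Hurwitz count or the cited classical facts about polynomials.
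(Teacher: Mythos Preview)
Your proof is correct and follows exactly the route the paper indicates: the paper does not give a detailed argument but simply notes that the result is obtained in \cite{XY} by combining the Straightening Theorem (Theorem~\ref{straightening}) with Fatou's theorem \cite[Theorem~4.1, p.\,66]{CG}, which is precisely what you carry out. Your additional bookkeeping with the Riemann--Hurwitz count and the multiplicity-preserving correspondence of critical points under the hybrid conjugacy is the natural way to make that sketch rigorous.
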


The following lemma is very useful when one wants to prove the non-existence of Herman rings in the holomorphic family.

\begin{lema}[{\cite[Main Theorem]{Ya}}]\label{lema-no-Herman}
Any rational map having at most one critical orbit in its Julia set has no Herman rings.
\end{lema}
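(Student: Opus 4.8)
The plan is to argue by contradiction. Assume $f$ is a rational map with at most one critical orbit in its Julia set, and suppose towards a contradiction that $f$ has a Herman ring. Since every Fatou component of $f$ is eventually periodic (Sullivan), $f$ then has a cycle of Herman rings $H_0\to H_1\to\cdots\to H_{p-1}\to H_0$ with $f^{\circ p}|_{H_j}$ conformally conjugate to a rigid irrational rotation. Fix $H:=H_0$ and let $\gamma^-,\gamma^+$ be its two boundary components: these are disjoint continua contained in $J(f)$, each is invariant under $f^{\circ p}$, and the induced boundary dynamics of $f^{\circ p}$ on each of them has irrational rotation number. The aim is to show that $\gamma^-$ and $\gamma^+$ force two \emph{distinct} critical orbits to lie in $J(f)$, which contradicts the hypothesis.

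The first ingredient is Fatou's classical theorem on the boundaries of rotation domains: the boundary of a periodic Herman ring is contained in the closure of the postcritical set $P(f)=\overline{\bigcup_{k\geq 1}f^{\circ k}(\Crit(f))}$, so $\gamma^-\cup\gamma^+\subset P(f)$. I would then upgrade this via Ma\~{n}\'{e}'s theorem. If a boundary component $\gamma$ contains a critical point, that critical point already gives a critical orbit in $J(f)$; and if $\gamma$ contains a parabolic periodic point, then $f$ has a parabolic cycle, whose immediate basins contain critical points and which one shows (by a separate, easier argument) forces a second critical orbit accumulating on $J(f)$. Otherwise $\gamma$ is a compact $f^{\circ p}$-invariant non-hyperbolic subset of $J(f)$ with neither critical nor parabolic points, so Ma\~{n}\'{e}'s theorem says $\gamma$ meets the $\omega$-limit set of some recurrent critical point $c$; since $f^{\circ p}$ has dense orbits on $\gamma$ (irrational rotation number) and $\omega(c)$ is forward invariant, we get $\gamma\subset\omega(c)$, and because $\omega(c)$ is infinite while a recurrent critical point with infinite $\omega$-limit set cannot lie in the Fatou set (it would be a critical point of a rotation domain, impossible, or its $\omega$-limit set would be a finite attracting or parabolic cycle), we conclude $c\in J(f)$ with its entire forward orbit in $J(f)$. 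Applying this to $\gamma^-$ and $\gamma^+$ yields recurrent critical points $c^-,c^+\in J(f)$ with $\gamma^-\subset\omega(c^-)$ and $\gamma^+\subset\omega(c^+)$.

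The crucial step is to show that $c^-$ and $c^+$ do not lie in the same grand orbit --- otherwise $\omega(c^-)=\omega(c^+)$ would be a single $\omega$-limit set containing both $\gamma^-$ and $\gamma^+$. To rule this out I would invoke the quasi-conformal surgery of Shishikura underlying the refined Fatou--Shishikura inequality: cutting $\EC$ along the $f$-invariant cycle of core curves of the $H_j$ and collapsing one of the two sides replaces the cycle of Herman rings by a cycle of Siegel disks, producing a rational map conformally conjugate to $f$ along (the grand orbit of) the uncollapsed side and with the new Siegel disk bounded by the corresponding boundary curve; Fatou's theorem applied to this map and transported back through the conjugacy exhibits a critical point of $f$ that accumulates on $\gamma^-$ and whose orbit remains on the $\gamma^-$-side, and the symmetric surgery gives one that accumulates on $\gamma^+$ and remains on the $\gamma^+$-side. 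Since the two surgeries collapse opposite sides, these two critical points lie in distinct grand orbits, each with forward orbit in $J(f)$, contradicting the assumption that $f$ has at most one critical orbit in $J(f)$. Hence $f$ has no Herman ring.

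I expect the heart of the difficulty to be this last step: a purely topological argument does not suffice to forbid a single recurrent critical orbit from accumulating on both boundary components of $H$ --- the orbit lies in $J(f)$ and so avoids the annulus $H$, but this alone does not prevent accumulation on both sides --- so one must set up Shishikura's surgery carefully and check that the two critical points it produces are genuinely grand-orbit inequivalent, or else carry out a delicate direct analysis of the dynamics of the complementary disks of the Herman cycle together with all their preimage annuli. The remaining ingredients (Sullivan's classification, Fatou's theorem on Herman-ring boundaries, Ma\~{n}\'{e}'s theorem) are standard, and the degenerate cases in the second step are handled by hand.
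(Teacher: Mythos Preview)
The paper does not prove this lemma at all: it is quoted verbatim as the Main Theorem of \cite{Ya} and used as a black box, with no argument given in the present article. There is therefore nothing here to compare your attempt against.

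For what it is worth, your outline is broadly the right strategy and is close in spirit to the argument in \cite{Ya}: locate critical accumulation on each of the two boundary components of the Herman ring, and then use a one-sided Shishikura surgery (collapsing one complementary disk of the core curve to turn the Herman cycle into a Siegel cycle) to manufacture, on each side separately, a critical point of $f$ whose forward orbit stays on that side and accumulates on the corresponding boundary component, hence two grand-orbit-distinct critical orbits in $J(f)$. You have correctly identified the crux --- a single recurrent critical orbit in $J(f)$ could in principle accumulate on both $\gamma^-$ and $\gamma^+$, so Ma\~n\'e alone does not finish --- and the surgery is indeed what resolves it. One loose end: your parabolic sub-case in the second paragraph is both unnecessary and not correctly argued (the existence of a parabolic basin does not by itself produce a \emph{second} critical orbit in $J(f)$; and in any event periodic points on $\partial H$ are not the real obstruction here). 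But this is cosmetic; the substantive plan is sound.
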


\begin{proof}[{Proof of Theorem \ref{main-thm} (2)}]
The arguments will be divided into two cases: the first one is $v_1\in B_\infty$ and the second one is $v_1\in \MA(\infty)\setminus B_\infty$.

(i) Suppose that $v_1\in B_\infty$. Then $B_\infty$ is completely invariant and $J(f_\lambda)=\partial B_\infty$ by Proposition \ref{inBandT}. We will choose some suitable domains to construct a polynomial-like mapping and prove that its Julia set is connected and quasi-conformally conjugate to the Julia set of $f_\lambda$.

Since $\infty$ is a super-attracting fixed point, we can choose a small simply connected neighborhood $\Omega_0$ of $\infty$ such that $v_1\not\in \overline{\Omega}_0$, $f_\lambda(\overline{\Omega}_0)\subset \Omega_0$ and $\partial \Omega_0$ is a Jordan curve which is disjoint from the forward orbit of $v_1$ (note that the forward orbit of $v_1$ is discrete except the unique possible accumulation point at $\infty$). For $m\geq 0$, let $\Omega_m$ be the connected component of $f_\lambda^{-m}(\Omega_0)$
containing $\Omega_0$. Then we have $\Omega_0\subset \Omega_1\subset \Omega_2\subset\cdots\subset \Omega_m\subset\cdots$
and $B_\infty=\bigcup_{m\geq 0}\Omega_m$. Since $v_1\in B_\infty$, there must exist $m_0\geq 1$ such that $v_1\in \Omega_{m_0}\setminus \overline{\Omega}_{m_0-1}$. By Lemma \ref{lema-Riem-Hur}, it follows that both $\Omega_{m_0}$ and $V:=\EC\setminus\overline{\Omega}_{m_0}$ are simply connected. Moreover, since $\partial \Omega_0$ is a Jordan curve which is disjoint from the critical orbit of $v_1$, this means that $\Omega_{m_0}$ and $V$ are both Jordan domains. Note that $v_0\in V$ and $v_1\notin V$. It follows that $f_\lambda^{-1}(V)\subset V$ is a Jordan domain by Lemma \ref{connectivity}.

Let $U=f_\lambda^{-1}(V)$. We obtain a polynomial-like mapping $(U, V, f_\lambda)$ with degree $2n$. Since $v_0\not\in\MA(\infty)$, it means that the unique critical orbit of $(U,V,f_\lambda)$ is contained in $U$. Therefore, the Julia set of $(U,V,f_\lambda)$ is connected by Lemma \ref{cor-straight-thm}. Since $\EC\setminus V\subset\MA(\infty)$, we know that the Julia set of the polynomial-like mapping $(U,V,f_\lambda)$ is homeomorphic to that of the rational map $f_\lambda$. Therefore, the Julia set of the rational map $f_\lambda$ is connected.

(ii) If $v_1\in \MA(\infty)\setminus B_\infty$, then each Fatou component of $f_\lambda$ contains at most one critical value. Note that $f_\lambda$ contains at most one critical orbit in its Julia set. It follows that $f_\lambda$ has no Herman rings by Lemma \ref{lema-no-Herman}. According to Lemma \ref{CFPCS}, this means that $J(f_\lambda)$ is connected. The proof is complete.
\end{proof}

We now give specific examples to show that the Julia sets discussed in the proof of Theorem \ref{main-thm} (2) happen indeed.

\begin{exam}\label{exam-1}
(i) Let $n=3$ and $\lambda=e^{\frac{\pi\ii}{3}}$. Then $v_0\not\in\MA(\infty)$ and $v_1\in B_\infty\subset\MA(\infty)$;

(ii) Let $n=3$ and $\lambda=\sqrt{3}/9$. Then $v_0\not\in\MA(\infty)$ and $v_1\in \MA(\infty)\setminus B_\infty$.
\end{exam}

See Figure \ref{Fig_Julia-thm2} for the Julia sets of Example \ref{exam-1}.

\begin{figure}[!htpb]
  \setlength{\unitlength}{1mm}
  \centering
  \includegraphics[width=70mm]{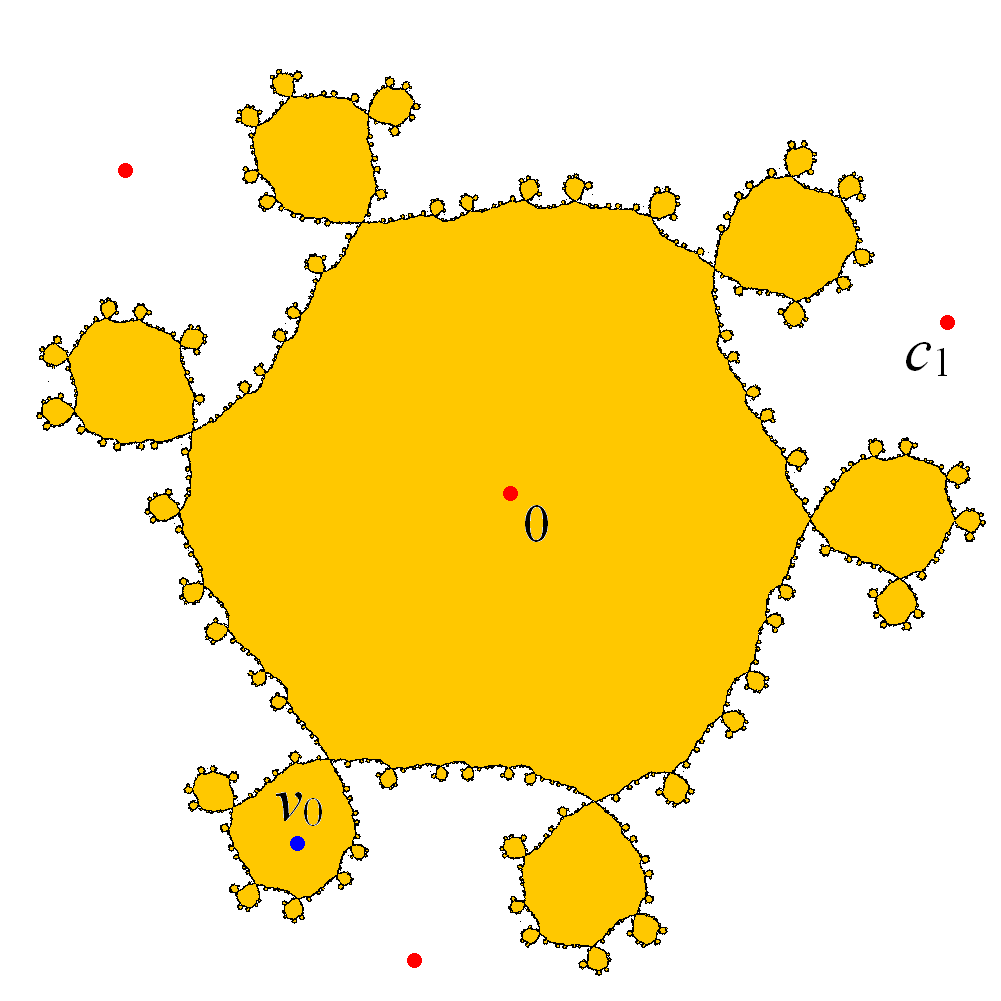}
  \includegraphics[width=70mm]{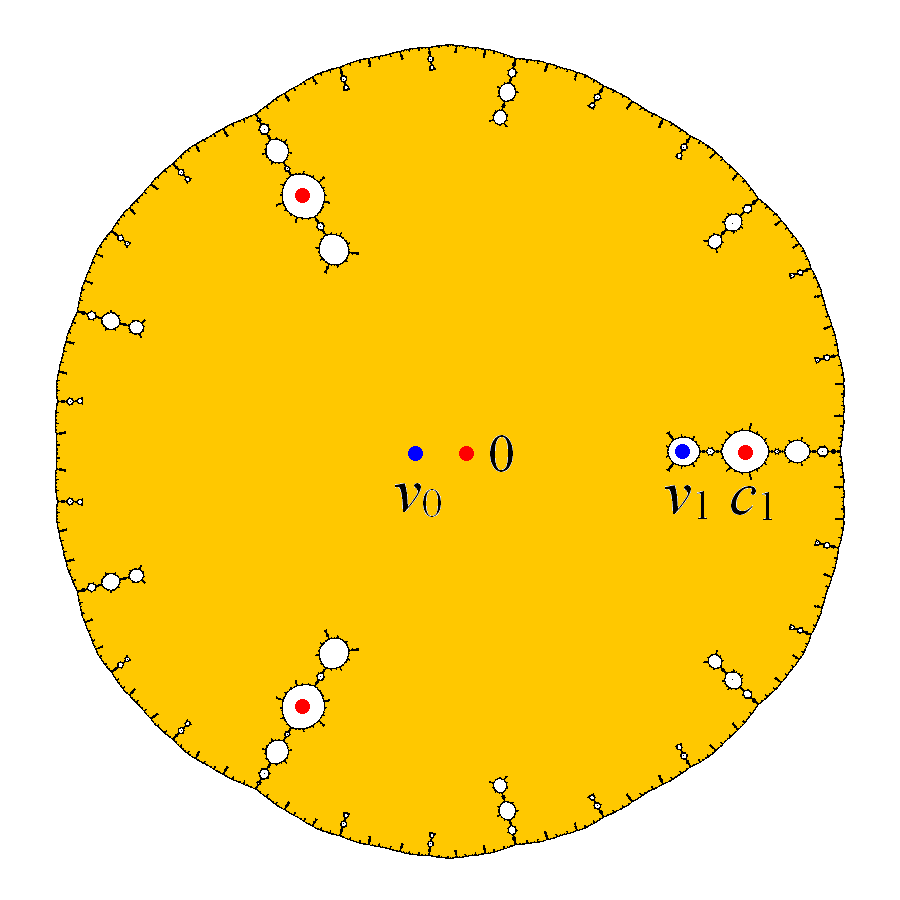}
  \caption{The Julia sets of $f_\lambda$ with different parameters $\lambda_1=e^{\frac{\pi\ii}{3}}$ and $\lambda_2=\sqrt{3}/9$ (from left to right), where $n=3$. The parameter $\lambda_1$ is chosen such that the free critical value $v_1$ lies in the immediate attracting basin of $\infty$ while $\lambda_2$ is chosen such that $v_1$ lies in the attracting basin but not in the immediate attracting basin of $\infty$. These two Julia sets correspond to the two cases that appeared in the proof of Theorem \ref{main-thm} (2). The free critical points and values are marked by red and blue dots respectively.}
  \label{Fig_Julia-thm2}
\end{figure}

\begin{proof}
(i) If $\lambda=e^{\frac{\pi\ii}{3}}$, then $(-\lambda)^3=1$. By \eqref{equ-family-other}, we have
\begin{equation*}
f_\lambda(v_0)=\frac{(-\lambda)^6-\lambda (-\lambda)^3+\lambda^2}{(-\lambda)^3-\lambda}=\frac{1-\lambda+\lambda^2}{1-\lambda}=0.
\end{equation*}
This means that $0\mapsto v_0\mapsto 0$ forms a super-attracting periodic orbit with period two and hence $v_0\not\in\MA(\infty)$. If $n=3$ and $\kappa=1/5$, by \eqref{equ-a-domain-2}, we have
\begin{equation*}
\frac{1}{1+\kappa}\Big(\frac{2\kappa}{(1+\kappa)(\sqrt{\kappa^2+4\kappa}+\kappa)}\Big)^{\frac{1}{n-1}}=\frac{5}{6}\Big(\frac{\sqrt{21}-1}{12}\Big)^{\frac{1}{2}}
<1=|e^{\frac{\pi\ii}{3}}|.
\end{equation*}
By Lemma \ref{lema-domain-a-2}, we have $v_1\in B_\infty\subset\MA(\infty)$.

(ii) If $n=3$ and $\kappa=1/5$, we have\footnote{The number $\sqrt{6}/9$ in this inequality will be used in the construction of an example in next section.}
\begin{equation}\label{equ-cal-1}
\frac{1}{1+\kappa}\Big(\frac{2\kappa}{(1+\kappa)(\sqrt{\kappa^2+4\kappa}+\kappa)}\Big)^{\frac{1}{n-1}}>\frac{5}{6}\Big(\frac{4-1}{12}\Big)^{\frac{1}{2}}
=\frac{5}{12}>\frac{\sqrt{6}}{9}>\frac{\sqrt{3}}{9}.
\end{equation}
By Lemma \ref{lema-domain-a-1}, we have $v_0\not\in\MA(\infty)$ if $\lambda=\sqrt{3}/9$. Note that $(3\lambda)^3-\lambda=0$. This means that $f_\lambda(v_1)=\infty$ and hence $v_1\in \MA(\infty)$. Therefore, we only need to prove that $v_1\not\in B_\infty$.

Since $\lambda=\sqrt{3}/9$ is real, we consider the restriction of $f_\lambda$ on the real axis $\R$. According to \eqref{equ-family-other}, $f_\lambda$ is continuous in the interval $(\sqrt[3]{\lambda},+\infty)=(v_1,+\infty)$. Recall that $c_1=\sqrt[3]{2\lambda}$ is a free critical point of $f_\lambda$. Note that $f_\lambda(c_1)=v_1=\sqrt[3]{\lambda}<c_1$ and there exists a sufficiently large $M>1$ such that $f_\lambda(M)>M$ and $M\in B_\infty$. There must exist a real number $x_1\in(c_1, M)$ such that $f_\lambda(x_1)=x_1$. Since all the critical values of $f_\lambda$ lie in (super) attracting basins, it follows that $x_1$ is a repelling fixed point and hence contained in the Julia set of $f_\lambda$.

Suppose that $v_1\in B_\infty$. Then $c_1\in B_\infty$ by Proposition \ref{inBandT}. There exists a smooth curve $\gamma:[0,1]\to B_\infty$ connecting $c_1$ with $M$ such that $\gamma(0)=c_1$ and $\gamma(1)=M$. Let $\gamma_+$ be a new curve defined as $\gamma_+(t)=\gamma(t)$ if $\textup{Im\,}\gamma(t)\geq 0$ and $\gamma_+(t)=\overline{\gamma(t)}$ if $\textup{Im\,}\gamma(t)<0$. Then $\gamma_+$ is a piecewise smooth curve which is disjoint with the lower-half plane. Moreover, $\gamma_+$ is contained in $B_\infty$ since the Fatou set of $f_\lambda$ is symmetric about the real line. Note that $\gamma_+$ is compact and $B_\infty$ is open, one can move $\gamma_+$ slightly in $B_\infty$ (but keep the two ends fixed) such that the new curve $\gamma_+'$ is contained in the upper-half plane (except two ends). Then $\beta:=\gamma_+'\cup\overline{\gamma}_+'$ is a Jordan curve contained in $B_\infty$, $\beta\cap\R=\{c_1,M\}$ and the bounded component of $\C\setminus\beta$ contains a repelling fixed point $x_1\in J(f_\lambda)$. Since $v_0=-\lambda\not\in\MA(\infty)$, it follows that $(-\infty,v_0]\cap J(f_\lambda)\neq\emptyset$. Since $v_0<c_1$, it means that $\beta$ separates $J(f_\lambda)$. This is a contradiction since $J(f_\lambda)$ is connected by Theorem \ref{main-thm} (2). Therefore, we have $v_1\not\in B_\infty$.
\end{proof}

\section{Both free critical values do not escape}\label{main-thmape}

In this section, we consider the case that $v_0$ and $v_1$ do not belong to $\MA(\infty)$. Since the immediate super-attracting basin of $\infty$ is simply connected, it follows that each Fatou component of $\MA(\infty)$ is also simply connected.

\begin{lema}\label{lema-our-case}
Suppose that $v_1\not\in B_\infty$. Then we have \eqref{equ-a-domain-1} if $\kappa=1/5$.
\end{lema}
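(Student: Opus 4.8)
The plan is to argue by contraposition, exploiting that the hypotheses \eqref{equ-a-domain-1} and \eqref{equ-a-domain-2} are logically complementary for parameters $\lambda\in\C^*$. Writing
$$C_n(\kappa):=\frac{1}{1+\kappa}\left(\frac{2\kappa}{(1+\kappa)(\sqrt{\kappa^2+4\kappa}+\kappa)}\right)^{\frac{1}{n-1}},$$
the condition \eqref{equ-a-domain-1} with $\kappa=1/5$ reads $0<|\lambda|<C_n(1/5)$, while \eqref{equ-a-domain-2} with $\kappa=1/5$ reads $|\lambda|\geq C_n(1/5)$; since $\lambda\in\C^*$ forces $|\lambda|>0$ automatically, exactly one of these two holds.

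So first I would suppose, for contradiction, that \eqref{equ-a-domain-1} fails for $\kappa=1/5$. Then $\lambda$ satisfies \eqref{equ-a-domain-2} with $\kappa=1/5$, and Lemma \ref{lema-domain-a-2} applies directly: $f_\lambda$ maps $\EC\setminus\D(0,3|\lambda|)$ into its interior, so the forward orbit of $v_1=3\lambda$ stays in that region and converges to $\infty$, giving $v_1\in B_\infty$. This contradicts the standing hypothesis $v_1\notin B_\infty$. Hence \eqref{equ-a-domain-1} must hold with $\kappa=1/5$, which is the assertion.

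There is essentially no obstacle here: all the analytic content — the estimate showing that $f_\lambda$ expands points outside the disk of radius $3|\lambda|$ once $|\lambda|$ is not too small — has already been carried out in Lemma \ref{lema-domain-a-2}. The only points to check are the elementary bookkeeping that the two displayed inequalities are negations of one another, and the (trivial) identification of ``the immediate attracting basin of $\infty$'' in the statement of Lemma \ref{lema-domain-a-2} with $B_\infty$. One should also note that fixing $\kappa=1/5$ loses nothing: the conclusion we need is precisely a statement about this value of $\kappa$, so no optimization over $\kappa$ is required.
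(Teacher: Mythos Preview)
Your proposal is correct and follows essentially the same approach as the paper: the paper's proof is a one-line appeal to the contrapositive of Lemma \ref{lema-domain-a-2}, and you have simply unpacked that contrapositive in more detail, noting explicitly that \eqref{equ-a-domain-1} and \eqref{equ-a-domain-2} are complementary for $\lambda\in\C^*$ and that the immediate attracting basin of $\infty$ is $B_\infty$.
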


\begin{proof}
If $v_1$ is not contained in the immediate super-attracting basin of $\infty$, it follows from Lemma \ref{lema-domain-a-2} that $\lambda$ must satisfy \eqref{equ-a-domain-1} if $\kappa=1/5$.
\end{proof}

\begin{proof}[{Proof of Theorem \ref{main-thm} (3)}]
Since $v_1\not\in\MA(\infty)$, we know that $v_0$ is contained in an attracting basin $B_0$ which is completely invariant under $f_\lambda$ by Lemmas \ref{lema-our-case} and \ref{lema-domain-a-1}.

(3a) By the hypothesis, each Fatou component of $f_\lambda$ contains at most one critical value and there is at most one critical orbit in the Julia set. By Lemmas \ref{CFPCS} and \ref{lema-no-Herman}, this means that $J(f_\lambda)$ is connected.

(3b) If $v_0$ and $v_1$ are both contained in $B_0$, then the Fatou set of $f_\lambda$ is equal to $B_0\cup \MA(\infty)$ and the Julia set $J(f_\lambda)=\partial B_0=\partial \MA(\infty)$. Let $D_\lambda:=\D(-\lambda ,\kappa|\lambda|)$ be an open disk with $\kappa=1/5$ and denote $V:=\EC\setminus\overline{D}_\lambda$. According to Lemma \ref{lema-domain-a-1} (2), the preimage $U:=\EC\setminus f_\lambda^{-1}(\overline{D}_\lambda)$ is a Jordan domain. Moreover, $(U,V,f_\lambda)$ is a polynomial-like mapping\footnote{Although $U$ and $V$ are not domains in $\C$, one can use a coordinate transformation to obtain a polynomial-like mapping since as a rational map, $f_\lambda$ is holomorphic on whole $\EC$.} with degree $2n$ and $\EC\setminus V=\overline{D}_\lambda$ does not contain any critical points. By Theorem \ref{straightening}, $(U,V,f_\lambda)$ is hybird equivalent to a polynomial $g$ with degree $2n$. Note that $f_\lambda$ has a super-attracting fixed point at $\infty$ with local degree $n$ and the free critical points $c_1$, $\cdots$, $c_n$ are attracted by the basin $B_0$. This means that $g$ has also a super-attracting fixed point $z_0$ with local degree $n$ and the rest $n$ critical points of $g$ are escaped to $\infty$. In particular, the Julia set of $g$ is disconnected.

Let $K_0$ be the filled Julia component of $g$ which contains the super-attracting fixed point $z_0$. According to \cite[Main Theorem]{QY}, all the Julia components of $g$ are points except the components that are iterated onto the component $K_0$. We claim that $K_0$ is a closed Jordan disk. Indeed, by the hyperbolicity of $g$, one can construct a polynomial-like mapping $(U',V',g)$ with degree $n$ such that $\overline{K}_0\subset U'$, $\overline{U}'\subset V'$ and the filled Julia set of $(U',V',g)$ is exactly $K_0$. Note that $(U',V',g)$ is hybird equivalent to $P_n(z)=z^n$ whose filled Julia set is the closed unit disk. This means that $K_0$ is actually a closed Jordan disk. Therefore, the Julia set of $g$ is the union of countably many Jordan curves and uncountably many points. Since $(U,V,f_\lambda)$ is hybird equivalent to $g$ and $\EC\setminus V=\overline{D}_\lambda$ is contained in the attracting basin $B_0$, we know that the Julia set of $f_\lambda$ is homeomorphic to that of $g$. This means that $J(f_\lambda)$ consists of countably many Jordan curves and uncountably many points.
\end{proof}

\begin{exam}\label{exam-2}
(i) Let $n=3$ and $\lambda=\sqrt{6}/9$. Then $v_0$, $v_1\not\in\MA(\infty)$ and they are in different Fatou components;

(ii) Let $n=3$ and $\lambda=4/25$. Then $v_0$, $v_1\not\in\MA(\infty)$ and both of them are contained in an immediate attracting basin of $f_\lambda$.
\end{exam}

See Figure \ref{Fig_Julia-thm3} for the Julia sets of Example \ref{exam-2}.

\begin{figure}[!htpb]
  \setlength{\unitlength}{1mm}
  \centering
  \includegraphics[width=70mm]{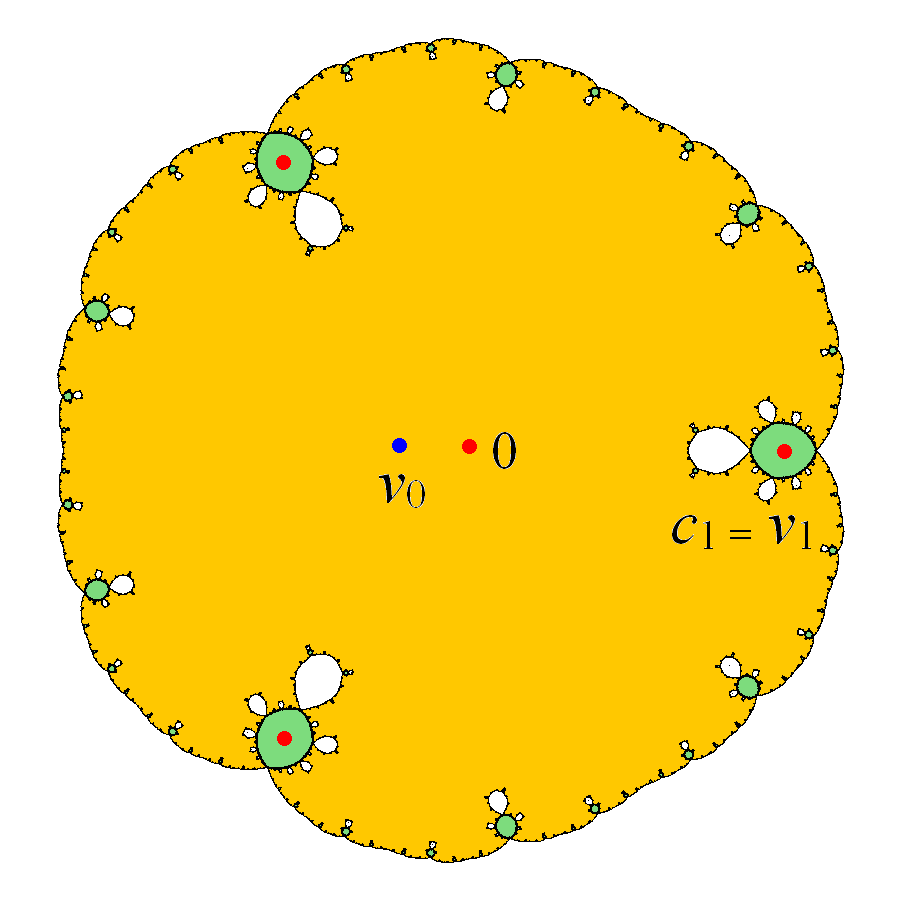}
  \includegraphics[width=70mm]{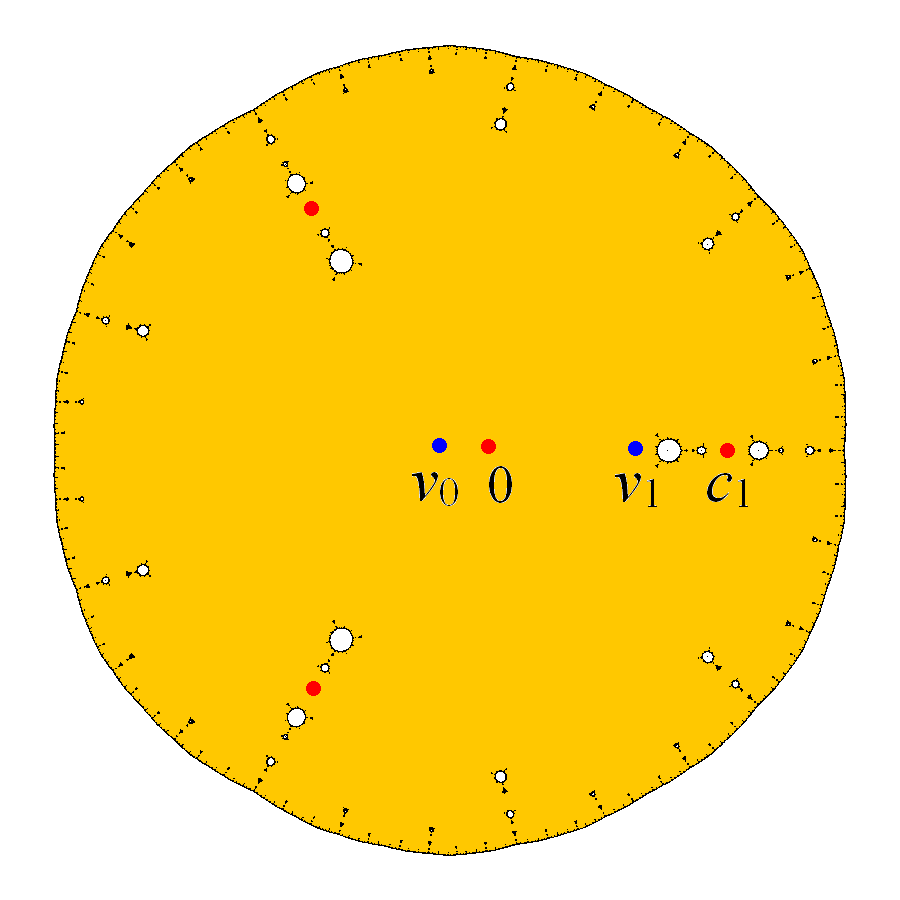}
  \caption{The Julia sets of $f_\lambda$ with different parameters $\lambda_3=\sqrt{6}/9$ and $\lambda_4=4/25$ (from left to right), where $n=3$. The parameter $\lambda_3$ is chosen such that $f_\lambda$ has three (super) attracting basins while $\lambda_4$ is chosen such that $v_0$ and $v_1$ lie in a same Fatou component. These two Julia sets correspond to the two cases that stated in Theorem \ref{main-thm} (3). As in Figure \ref{Fig_Julia-thm2}, the free critical points and values are marked by red and blue dots respectively.}
  \label{Fig_Julia-thm3}
\end{figure}

\begin{proof}
(i) If $n=3$ and $\lambda=\sqrt{6}/9$, then we have $c_1=\sqrt[3]{2\lambda}=3\lambda=v_1$. Therefore, $v_1$ is a super-attracting fixed point of $f_\lambda$. If $\lambda=\sqrt{6}/9$, by \eqref{equ-cal-1} and Lemma \ref{lema-domain-a-1}, there exists an attracting basin $B_0$ containing $\overline{\D}(-\lambda,|\lambda|/5)$ which is invariant under $f_\lambda$. Let $B_1$ be the super-attracting basin of $v_1$. Then $B_0\cap B_1=\emptyset$ since $v_1=3\lambda\not\in \overline{\D}(-\lambda,|\lambda|/5)$. Therefore, $v_0$, $v_1\not\in\MA(\infty)$ and they are in different Fatou components.

(ii) Since $5/12>4/25$, by \eqref{equ-cal-1} and Lemma \ref{lema-domain-a-1}, there exists an attracting basin $B_0$ containing $\overline{\D}(-\lambda,|\lambda|/5)$ which is completely invariant under $f_\lambda$. Therefore, it is sufficient to prove that the forward orbit of $v_1$ is contained in $\overline{\D}(-\lambda,|\lambda|/5)$. If $n=3$ and $\lambda=4/25$, a direct calculation shows that $|f_\lambda^{\circ 2}(v_1)-(-\lambda)|/|\lambda|=0.125\cdots<1/5$. The proof is complete.
\end{proof}

\section{The impossible types of Julia sets}\label{sec-impo}

As stated in the introduction, it was known that the Cantor circles Julia sets and Sierpi\'nski curves Julia sets can appear in McMullen family and the generalized McMullen family. We will prove in the present section that these two kind of Julia sets are not exist for the family $f_\lambda$. Moreover, we also prove that $f_\lambda$ has no Herman rings.

\begin{lema}\label{lema-sier-no}
The Julia set of any polynomial can never be a Sierpi\'nski curve.
\end{lema}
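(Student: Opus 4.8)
The plan is to derive a contradiction from the topological characterization of a Sierpi\'nski curve recalled earlier: a Sierpi\'nski curve is compact, connected, locally connected, nowhere dense, and any two complementary domains have disjoint closures which are Jordan curves. Assume for contradiction that $J(P)$ is a Sierpi\'nski curve for some polynomial $P$ of degree $d\geq 2$. The key observation is that the unbounded Fatou component — the attracting basin $\MA(\infty)$ of the super-attracting fixed point at $\infty$ — is one of the complementary domains of $J(P)$, and it is completely invariant under $P$. In a Sierpi\'nski curve the complementary domains are pairwise disjoint in their closures, so no other Fatou component can share a boundary point with $\MA(\infty)$.

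First I would record the standard fact that $J(P)$ is connected (otherwise it is disconnected, hence not a Sierpi\'nski curve which must be connected), so $\MA(\infty)$ is simply connected and its boundary is a Jordan curve; in fact, since $J(P)$ is locally connected, the B\"ottcher coordinate extends to a semiconjugacy and $\partial\MA(\infty)$ is a Jordan curve on which $P$ acts (via the external angle doubling-type map) as $\theta\mapsto d\theta$. Next I would note that $P$ must be hyperbolic or at least that $J(P)$ has empty interior forces all critical points into Fatou components; since $J(P)$ is a Sierpi\'nski curve there must be at least one bounded Fatou component $W$ (the carpet has infinitely many complementary domains besides the unbounded one). The component $W$ is eventually periodic by Sullivan, and its cycle is an attracting, super-attracting, or parabolic cycle (no Siegel disks or Herman rings, as those would give Fatou components with non-Jordan or non-disjoint boundary behavior incompatible with the carpet structure — or one simply invokes that all critical points lie in basins).

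The contradiction I would extract: take a periodic bounded Fatou component $W$ in the cycle $W=W_0\to W_1\to\cdots\to W_{p-1}\to W_0$. Its boundary $\partial W$ is a Jordan curve disjoint (in closure) from $\partial\MA(\infty)$ and from the boundaries of all other complementary domains. But a bounded periodic Fatou component of a polynomial always has its boundary touching, or being separated from $\MA(\infty)$ by the Julia set in a way that — here is the crux — some iterated preimage of $\partial\MA(\infty)$ must be accessible from $W$ or vice versa. The cleaner route: I would show $P$ restricted to $\EC\setminus(\mathrm{closure\ of\ the\ cycle\ of\ }W)$ together with $\MA(\infty)$ forces $\MA(\infty)$ to be completely invariant AND $W$'s grand orbit to be complementary domains; then count using the Riemann–Hurwitz formula (Lemma \ref{lema-Riem-Hur}) that the only way the boundary Jordan curves can stay pairwise disjoint is if $J(P)$ has a cut point, contradicting the carpet structure. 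Concretely, a bounded Fatou component eventually maps onto $W_0$, and some preimage component of $W_0$ must be adjacent to $\MA(\infty)$ since $\MA(\infty)$ is the only fully invariant component; adjacency means a shared boundary point, contradicting disjointness of complementary-domain closures.

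The main obstacle, and where I would spend the most care, is ruling out the configuration cleanly without hand-waving: one must argue that for a polynomial the Fatou component $\MA(\infty)$ is "surrounded" in the sense that $J(P)$ is the common boundary of $\MA(\infty)$ and the union of bounded Fatou components, and that local connectivity plus the Jordan curve structure of each complementary domain forces a puncture point or cut point. The slick way, which I expect the authors to use, is probably: a Sierpi\'nski curve has \emph{no cut points} and \emph{no local cut points}, yet for a polynomial with disconnected filled Julia set one finds cut points, and for connected $J(P)$ one invokes that $\partial\MA(\infty)$ being a Jordan curve disjoint from every other complementary-domain closure means $\overline{\MA(\infty)}$ meets the rest of $J(P)$ nowhere, so $J(P)=\partial\MA(\infty)$ is itself a Jordan curve — but a Jordan curve is not a Sierpi\'nski curve (it is not nowhere dense in itself in the required sense / it has no complementary bounded domains that are carpet holes), the final contradiction. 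I would therefore structure the proof as: (i) $J(P)$ connected and locally connected; (ii) $\partial\MA(\infty)$ is a Jordan curve; (iii) disjointness of complementary closures forces $J(P)=\partial\MA(\infty)$; (iv) a Jordan curve is not homeomorphic to the Sierpi\'nski carpet. Done.
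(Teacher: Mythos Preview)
Your final outline (i)--(iv) does land on a correct argument, and it is close in spirit to the paper's proof, but you are working much harder than necessary and you misattribute the key step. The equality $J(P)=\partial\MA(\infty)$ is not something you need to \emph{force} from the disjoint-closures hypothesis in step (iii): it holds for every polynomial of degree $\geq 2$, simply because $\MA(\infty)$ is completely invariant (see \cite[Corollary 4.12]{Mi2}). The paper uses exactly this fact as the first line of the proof, and then finishes in one sentence: if there is any bounded Fatou component $U$, then $\partial U\subset J(P)=\partial\MA(\infty)$, so the closures of the complementary domains $U$ and $\MA(\infty)$ meet, contradicting Whyburn's characterization; if there is no bounded Fatou component, there is only one complementary domain, again not a carpet.

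Everything in your middle two paragraphs --- Sullivan's theorem, classification of periodic components, ruling out Siegel disks and Herman rings, Riemann--Hurwitz counts, adjacency of preimage components, cut points and local cut points --- is unneeded. You also do not need local connectivity, the B\"ottcher extension, or the observation that $\partial\MA(\infty)$ is a Jordan curve: the contradiction comes purely from $\partial U\subset\partial\MA(\infty)$ versus the disjoint-boundaries axiom. Your route through ``$J(P)$ is a Jordan curve, and a Jordan curve is not a carpet'' is valid but adds an extra step; the paper's version short-circuits it by exhibiting two complementary domains with intersecting boundaries directly.
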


\begin{proof}
Let $P$ be a polynomial with degree at least two. Then $P$ has a super-attracting fixed point at $\infty$. Moreover, the basin $\MA(\infty)$ containing $\infty$ is completely invariant. Therefore, we have $J(P)=\partial\MA(\infty)$. If $P$ has no bounded Fatou components, then $P$ has exactly one Fatou component $\MA(\infty)$ and $J(P)$ cannot be a Sierpi\'nski curve. If $P$ has a bounded Fatou component $U$, then $\partial U\subset J(P)=\partial\MA(\infty)$. This also contradicts with the definition of the Sierpi\'nski curve.
\end{proof}

As an immediate corollary of Theorem \ref{straightening} and \ref{lema-sier-no}, we have

\begin{cor}\label{cor-sier-no}
The Julia set of any polynomial-like mapping can never be a Sierpi\'nski curve.
\end{cor}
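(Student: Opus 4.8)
The plan is to obtain this as a one-line consequence of the Straightening Theorem (Theorem \ref{straightening}) together with Lemma \ref{lema-sier-no}, exploiting only that ``being a Sierpi\'nski curve'' is a purely topological property — namely, being homeomorphic to the Sierpi\'nski carpet — and is therefore preserved by homeomorphisms.

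First I would apply Theorem \ref{straightening} to a given polynomial-like mapping $(U,V,f)$ of degree $d\geq 2$: there exist a polynomial $g$ of the same degree $d$ and a quasi-conformal homeomorphism $h$, defined on a neighborhood of $K(f)$, which conjugates $f$ to $g$ on that neighborhood. Since $h$ is in particular a homeomorphism intertwining the two dynamical systems, it sends $K(f)$ onto $K(g)$ and hence carries the Julia set $J(f)=\partial K(f)$ homeomorphically onto $J(g)=\partial K(g)$. Thus $J(f)$ and $J(g)$ are homeomorphic as topological spaces.

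Next I would invoke Lemma \ref{lema-sier-no}, which guarantees that $J(g)$ — the Julia set of the polynomial $g$ — is never a Sierpi\'nski curve. Because the class of Sierpi\'nski-curve spaces is closed under homeomorphism and $J(f)$ is homeomorphic to $J(g)$, it follows that $J(f)$ cannot be a Sierpi\'nski curve either, which is the claim.

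Since this argument is merely the composition of two results already established in the excerpt, I do not expect any genuine obstacle. The only point deserving a word of care is that the hybrid equivalence $h$ is a priori defined only on a neighborhood of the filled Julia set $K(f)$; but $J(f)\subset K(f)$, so this neighborhood already contains $J(f)$, and the homeomorphism type of $J(f)$ is detected entirely within it. One should also note that the argument requires nothing beyond topology: no quasi-conformality or complex structure of $h$ is used, only that it is a homeomorphism on a neighborhood of $J(f)$.
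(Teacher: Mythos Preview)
Your proposal is correct and matches the paper's own treatment, which simply records the corollary as an immediate consequence of Theorem~\ref{straightening} and Lemma~\ref{lema-sier-no} without further argument. The only additional content in your write-up is the explicit observation that the hybrid equivalence restricts to a homeomorphism $J(f)\to J(g)$ and that ``Sierpi\'nski curve'' is a homeomorphism-invariant notion, which is exactly what the paper leaves implicit.
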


\begin{proof}[{Proof of Theorem \ref{thm-no-type}}]
By Theorem \ref{main-thm}, we only need to consider cases (2) and (3).

(i) \textit{The non-existence of Cantor circles}. By definition, a Cantor circles Julia set consists uncountable many Jordan curves. Therefore, $J(f_\lambda)$ is not a Cantor set of circles by Theorem \ref{main-thm} (2) and (3).

(ii) \textit{The non-existence of Sierpi\'nski curves}. Since any Sierpi\'nski curve is connected, we only need to consider cases (2) and (3a). Suppose that $v_1$ is contained in the immediate basin $B_\infty$ of $\infty$. From the proof of Theorem \ref{main-thm} (2), one can construct a polynomial-like mapping $(U,V,f_\lambda)$ such that the Julia set of $(U,V,f_\lambda)$ is homeomorphic to that of $f_\lambda$. By Corollary \ref{cor-sier-no}, $J(f_\lambda)$ is not a Sierpi\'nski curve.

Suppose that $v_1\not\in B_\infty$. By Lemmas \ref{lema-our-case} and \ref{lema-domain-a-1}, one can construct a polynomial-like mapping $(U,V,f_\lambda)$ as in the proof of Theorem \ref{main-thm} (3b), where $V:=\EC\setminus\overline{D}_\lambda$, $U:=\EC\setminus f_\lambda^{-1}(\overline{D}_\lambda)$ and $D_\lambda:=\D(-\lambda ,|\lambda|/5)$. By the choice of the disk $D_\lambda$, the Julia set of $(U,V,f_\lambda)$ is quasi-conformally homeomorphic to that of $f_\lambda$. According to Corollary \ref{cor-sier-no}, $J(f_\lambda)$ is not a Sierpi\'nski curve.

(iii) \textit{The non-existence of Herman rings}. The Julia set of a rational map having a Herman ring is disconnected. So we only need to consider case (3b). However, in case (3b), all the critical points of $f_\lambda$ are contained in the Fatou set. This means that $f_\lambda$ has no Herman rings.
\end{proof}

\section{The case for $n=2$}

In this section, we make some brief comments on the family $f_\lambda$ with $n=2$, i.e.
\begin{equation*}
f_\lambda(z)= z^2+\frac{\lambda^2}{z^2-\lambda}, \text{ where } \lambda\in\C^*.
\end{equation*}
We can obtain the same results on the symmetric properties of the dynamical behaviors as proved in \S\ref{Prelim}. However, Lemmas \ref{lema-domain-a-1} and \ref{lema-domain-a-2} become invalid when $n=2$. Define
\begin{equation*}
\Lambda_0:=\{\lambda\in\C^*:v_0\not\in\MA(\infty)\} \text{ and } \Lambda_1:=\{\lambda\in\C^*:v_1\not\in\MA(\infty)\}.
\end{equation*}
If $n\geq 3$, then $\Lambda_1$ is compactly contained in $\Lambda_0$ by Lemmas \ref{lema-domain-a-1} and \ref{lema-domain-a-2}. However, if $n=2$, we have $(\partial\Lambda_0)\cap\Lambda_1\neq\emptyset$. See Figure \ref{Fig_parameter-n-2}.

\begin{figure}[!htpb]
  \setlength{\unitlength}{1mm}
  \centering
  \includegraphics[height=65mm]{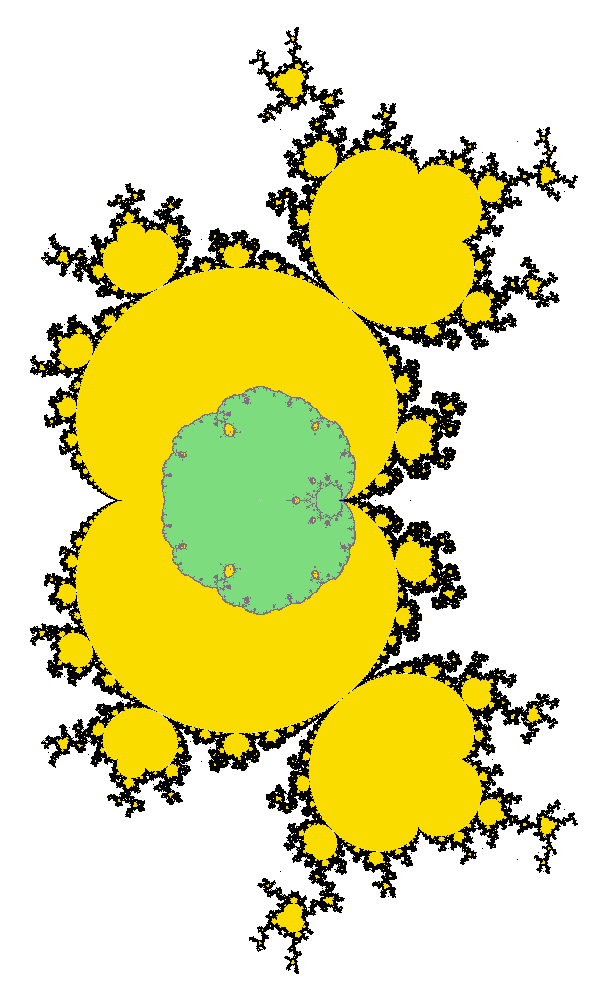}\quad
  \includegraphics[height=65mm]{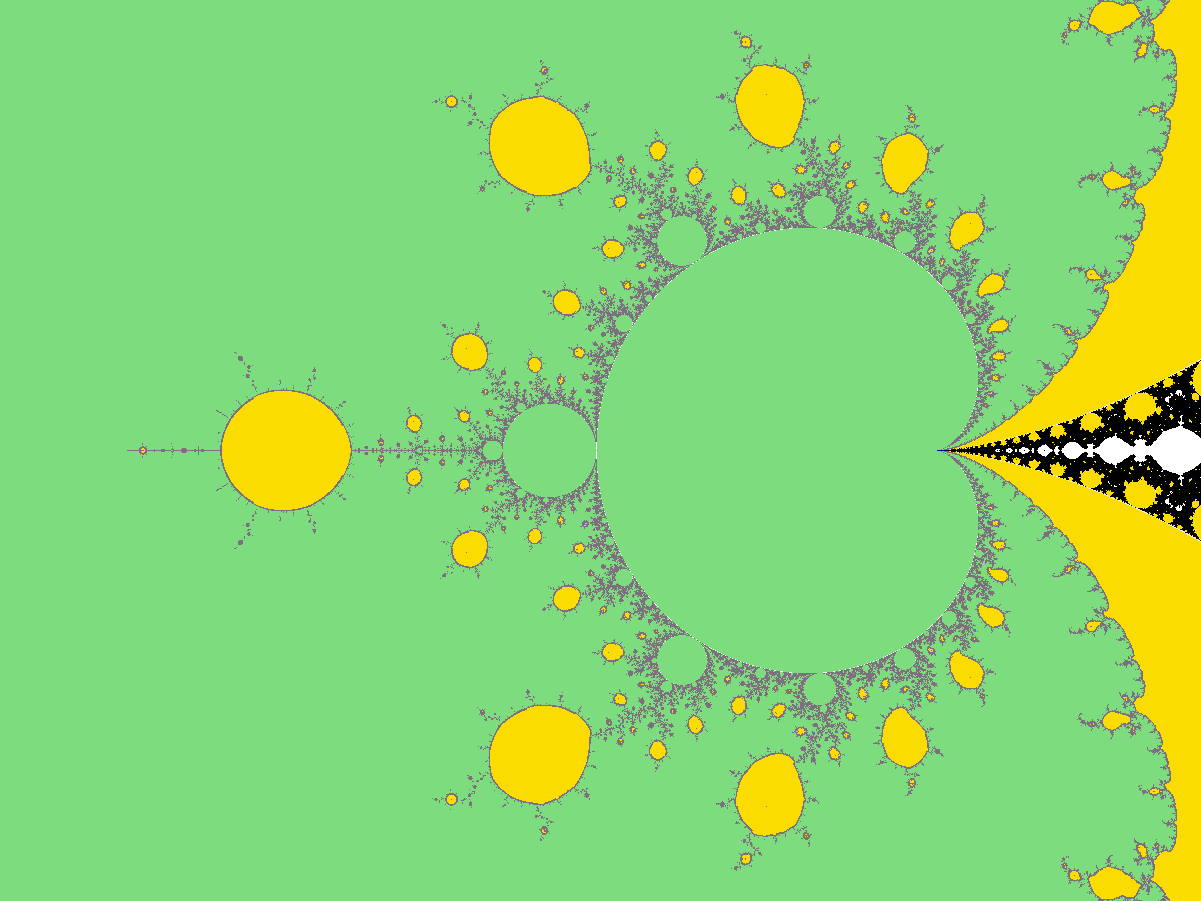}
  \caption{The parameter plane of $f_\lambda$ with $n=2$ and its zoom near $\lambda=1/4$. It can be seen from these two pictures that the intersection of the boundary of $\Lambda_0$ (the yellow part) and $\Lambda_1$ (the green part) is non-empty.}
  \label{Fig_parameter-n-2}
\end{figure}

Actually, we can prove

\begin{lema}
If $n=2$, then $1/4\in(\partial\Lambda_0)\cap\Lambda_1$.
\end{lema}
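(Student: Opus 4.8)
The plan is to analyze the dynamics of $f_\lambda$ with $n=2$ at the specific parameter $\lambda = 1/4$ and show directly that $v_0 = -1/4 \notin \MA(\infty)$ (which is the meaning of $1/4 \in \partial\Lambda_0$, once one verifies in addition that arbitrarily nearby parameters do have $v_0$ escaping) and that $v_1 = 3/4 \notin \MA(\infty)$. For $\lambda=1/4$ the map is $f(z) = z^2 + \frac{1/16}{z^2 - 1/4} = \frac{z^4 - z^2/4 + 1/16}{z^2-1/4}$, which is real-symmetric and commutes with $z\mapsto -z$, so it suffices to study the restriction of $f$ to the real line. First I would locate the relevant fixed points and finite critical orbits on $\R$: the free critical points are $0$ (with $f(0)=v_0=-1/4$) and $c = \pm\sqrt{2\lambda} = \pm 1/\sqrt{2}$ (with $f(c)=v_1=3/4$), and the pole is at $z^2 = \lambda$, i.e. $z = \pm 1/2$, with $v_0 = -1/4$ and $v_1 = 3/4$ lying on opposite sides of the pole $1/2$.

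Next I would exhibit an attracting or neutral invariant set on $\R$ capturing the free critical orbit. The key computation is that $\lambda = 1/4$ should be the parameter at which the free critical orbit of $v_0$ (and in fact of $v_1$, given the compact-containment phenomenon breaks down precisely here) falls onto a parabolic cycle — this is exactly why $1/4$ lies on $\partial\Lambda_0$ while still sitting in $\Lambda_1$. Concretely, I expect that $f$ has a parabolic fixed point $p$ on the real axis with multiplier $1$ (solving $f(p)=p$, $f'(p)=1$ simultaneously should force $\lambda = 1/4$), and that both real intervals bounding the immediate parabolic basin contain the relevant critical values, or alternatively that $v_0$ itself is eventually periodic landing on a parabolic point. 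I would verify $f(p)=p$ and $f'(p)=1$ by a direct (but short) algebraic check, identify the attracting petal, and check by an explicit estimate on a real interval $I$ around $p$ that $f(I)\subset I$ and that the orbit of $v_0$ enters $I$; since a parabolic point lies in the Julia set and its basin is a bounded Fatou component, this gives $v_0, v_1 \notin \MA(\infty)$, hence $1/4 \in \Lambda_0 \cap \Lambda_1$. To upgrade $1/4 \in \Lambda_0$ to $1/4 \in \partial\Lambda_0$, I would note that a parabolic parameter is a limit of parameters for which the perturbed orbit escapes: perturbing $\lambda$ slightly (in the appropriate direction, opening the parabolic petal outward) sends the critical orbit of $v_0$ off to $\infty$, so every neighborhood of $1/4$ meets $\C^*\setminus\Lambda_0$, giving $1/4 \in \partial\Lambda_0$; combined with $1/4 \in \Lambda_1$ this is the claim.

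The main obstacle will be the last step — establishing rigorously that the perturbation $\lambda = 1/4 + \epsilon$ forces $v_0$ into $\MA(\infty)$ for suitable small $\epsilon$, rather than merely into some other bounded attracting basin created by the parabolic bifurcation. The clean way to handle this is a transversality / holomorphic-motion argument: the parabolic cycle at $\lambda=1/4$ bifurcates, and on one side the two formerly-merged fixed points become a repelling pair sandwiching no Fatou component along that part of $\R$, so the real segment $(-\infty, v_0]$ — which is mapped, roughly, monotonically outward once the petal opens — carries the critical orbit monotonically toward $\infty$; quantitatively this needs a lower bound $f_{\lambda}(x) > x$ for $x$ in a suitable real half-line once $\lambda$ is slightly larger than $1/4$, analogous to the estimate in Lemma \ref{lema-domain-a-2}. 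I would carry this out via an explicit real-variable inequality rather than appealing to general parabolic-implosion theory. Everything else (the symmetry reductions, the fixed-point and multiplier computations, the interval estimates) is routine once the parabolic point $p$ is correctly identified, so I would front-load that identification.
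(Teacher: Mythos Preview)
Your outline is correct and tracks the paper's approach closely, but you are working harder than necessary in two places.

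First, at $\lambda=1/4$ the fixed-point equation $f_\lambda(z)=z$ factors completely: $(z^2 - z/2 - 1/4)^2 = 0$, giving \emph{two} parabolic fixed points $z_{1,2}=(1\pm\sqrt{5})/4$, each of multiplicity two (hence multiplier $1$). With two parabolic fixed points and two free critical values, the classical fact that every parabolic basin attracts at least one critical value immediately forces $v_0,v_1\notin\MA(\infty)$, so $1/4\in\Lambda_0\cap\Lambda_1$ with no interval estimates or petal identification needed. Your expectation of a single parabolic point $p$ capturing both critical values via interval trapping would still work once you discover the second fixed point, but the paper's route bypasses all of that.

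Second, for $1/4\in\partial\Lambda_0$ the paper does exactly what you propose in your final paragraph: it observes (by analogy with $z\mapsto z^2+\lambda$) that for real $\lambda>1/4$ the equation $f_\lambda(z)=z$ has no real solutions, so the real critical orbit is pushed monotonically to $\infty$; the paper then explicitly says ``we omit the details here.'' So your worry about this step is legitimate, but the paper does not resolve it any more rigorously than you plan to --- a short real-variable inequality (no fixed points on $\R$ combined with $f_\lambda(x)\to+\infty$ as $x\to+\infty$) suffices, and no parabolic-implosion or transversality machinery is required.
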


\begin{proof}
Indeed, if $n=2$ and $\lambda=1/4$, solving the equation
\begin{equation*}
f_\lambda(z)= z^2+\frac{1/16}{z^2-1/4}=z,
\end{equation*}
we obtain two fixed points $z_1=(1+\sqrt{5})/4$ and  $z_2=(1-\sqrt{5})/4$, and both of them have multiplicity two. This means that both $z_1$ and $z_2$ are parabolic fixed points of $f_\lambda$ with multiplier $1$. Since each fixed parabolic basin attracts at least one critical value, it follows that $v_0$ and $v_1$ are not attracted by $\infty$. Therefore, $1/4\in\Lambda_0\cap\Lambda_1$. It is sufficient to prove that $1/4\in\partial\Lambda_0$. Similar to the case of quadratic polynomials $z\mapsto z^2+\lambda$, one can check that $f_\lambda(z)=z$ has no solutions in $\R$ if $\lambda>1/4$. This means that $v_1\in\MA(\infty)$ if $\lambda>1/4$ and hence $1/4\in\partial\Lambda_0$. We omit the details here.
\end{proof}

We conjecture that $(\partial\Lambda_0)\cap\Lambda_1=\{1/4\}$ and $\Lambda_1\subset\Lambda_0$. See Figure \ref{Fig_Julia-n-2} for the Julia set of $f_\lambda$ with $\lambda=1/4$ and $n=2$.

\begin{figure}[!htpb]
  \setlength{\unitlength}{1mm}
  \centering
  \includegraphics[height=65mm]{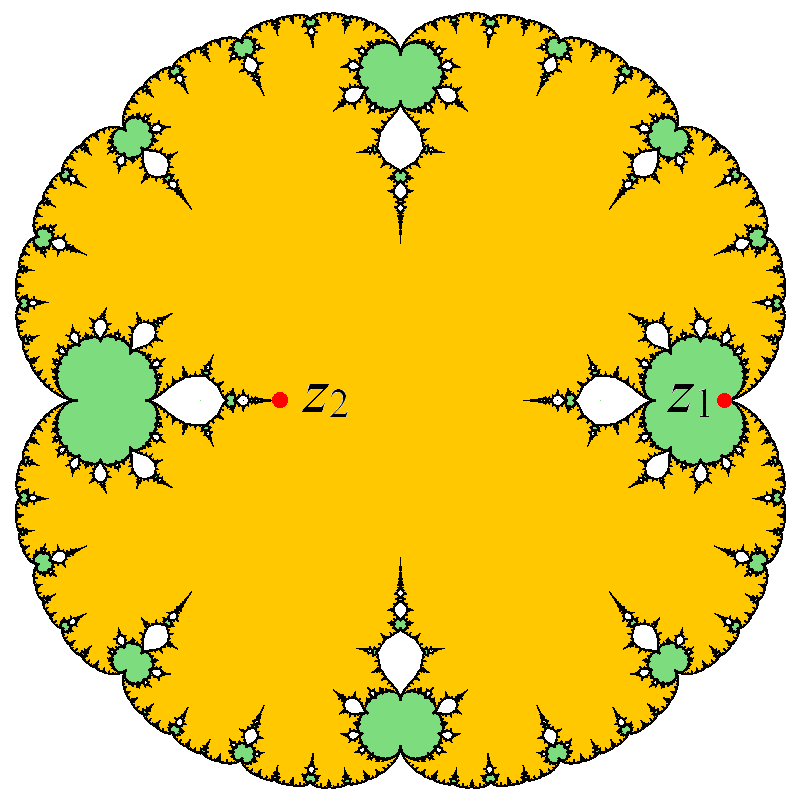}\quad
  \includegraphics[height=65mm]{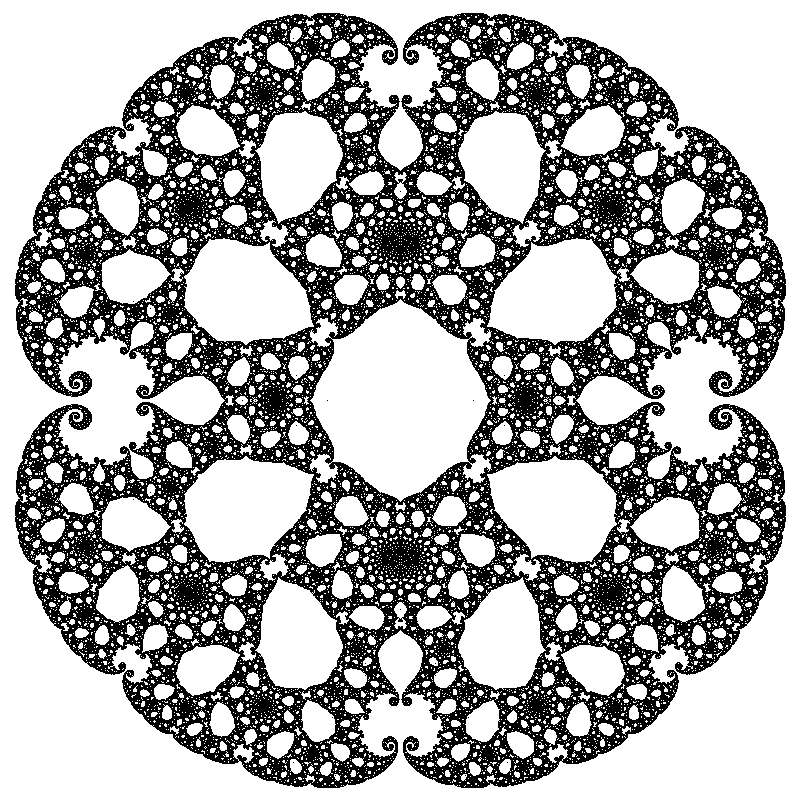}
  \caption{The Julia set of $f_\lambda$ with $\lambda=1/4$ and its perturbation ($\lambda=0.253$), where $n=2$. The Julia set on the left has two parabolic fixed points with multiplier $1$. This is a special example that cannot happen for $f_\lambda$ with $n\geq 3$. The Julia set on the right is a Cantor set. }
  \label{Fig_Julia-n-2}
\end{figure}

A possible method to prove Theorem \ref{main-thm} with $n=2$ is to find a ``nice" curve $\gamma$ separating $\partial\Lambda_0$ and $\Lambda_1$ (the point $1/4$ is on this curve) such that if the parameter is chosen in the unbound component of $\C^*\setminus\gamma$, then $v_1$ is attracted by $\infty$ while $v_0$ is not if the parameter is chosen in the bound component of $\C^*\setminus\gamma$. This strategy is a bit similar to the ideas in Lemmas \ref{lema-domain-a-1} and \ref{lema-domain-a-2}.


\end{document}